\documentclass[a4paper,11pt]{article}
\usepackage{latexsym,amssymb,amsmath,amsthm,amsxtra,xypic}
\usepackage{stmaryrd}
\usepackage{amsfonts}
\usepackage{amscd}
\usepackage{mathrsfs}
\usepackage[dvips]{graphicx}
\usepackage[all]{xy}

\topmargin -.5truein
\oddsidemargin .1truein
\textwidth 15.5cm
\textheight 23cm

\newcommand{\allowpagebreak}
\allowdisplaybreaks

\newtheorem{thm}{Theorem}[section]

\newtheorem{pro}[thm]{Proposition}
\newtheorem{ex}[thm]{Example}

\theoremstyle{definition}
\newtheorem{defi}[thm]{Definition}
\newtheorem{rmk}[thm]{Remark}

\setlength{\baselineskip}{1.8\baselineskip}

\newcommand {\emptycomment}[1]{}




\newcommand{\pf}{\noindent{\bf Proof.}\ }

\def\LM{\mathcal{LM}}

\newcommand{\hlie}{\h_{\mathrm{Lie}}}

\newcommand{\lam}{\lambda}
\newcommand{\si}{\sigma}

\newcommand{\gl}{\mathfrak {gl}}

\newcommand{\g}{\mathfrak g}

\newcommand{\wg}{\widehat{\mathfrak g}}

\newcommand{\wrho}{\widehat{\rho}}

\newcommand{\hg}{\widehat{\mathfrak g}}
\newcommand{\tg}{\widetilde{\mathfrak g}}
\newcommand{\h}{\mathfrak{h}}
\newcommand{\frkg}{\mathfrak g}
\newcommand{\frkh}{V}

\newcommand{\dlam}{_\lambda}

\newcommand{\dM}{f}
\newcommand{\E}{\mathrm{E}}
\renewcommand{\L}{L}

\newcommand{\Hom}{\mathrm{Hom}}

\newcommand{\Ker}{\mathrm{Ker}}

\newcommand{\End}{\mathrm{End}}

\newcommand{\ad}{\mathrm{ad}}

\newcommand{\id}{\mathrm{id}}

\newcommand{\sumn}{{\sum_{i=1}^n}\,}
\newcommand{\sumnoi}{{\sum_{i=1}^{n-1}}\,}
\newcommand{\sumnoii}{{\sum_{i=1}^{n-2}}\,}

\newcommand{\sumnoj}{{\sum_{j=1}^{n-1}}\,}
\newcommand{\sumnojj}{{\sum_{j=1}^{n-2}}\,}

\begin{document}

\allowdisplaybreaks

\title{Cohomology of $n$-Lie algebras in Loday-Pirashvili category}

\author{Tao Zhang\thanks{{E-mail:} zhangtao@htu.edu.cn}\\
{\footnotesize College of Mathematics and Information Science, Henan Normal University, Xinxiang 453007, PR China}}

\date{}
\maketitle

\footnotetext{{\it{Keyword}:  Loday-Pirashvili category, $n$-Lie algebras, Leibniz algebras, cohomology, deformations}}

\footnotetext{{\it{Mathematics Subject Classification (2020)}}: 17A32, 17A42, 17B56.}

\begin{abstract}
In this article, we present the concept of $n$-Lie algebras in the Loday-Pirashvili category. We examine the representation and cohomology theory of $n$-Lie algebras in this category, and also explore their applications in infinitesimal deformation and abelian extension theory.
\end{abstract}


\section{Introduction}

Filippov introduced the concept of $n$-Lie algebras in \cite{Fil1}, which may also be referred to as Filippov algebras or Nambu-Lie algebras. An $n$-Lie algebra, denoted as $\frkg$, is a vector space equipped with an $n$-ary totally skew-symmetric linear map ($n$-bracket)
$\frkg\times\cdots\times\frkg\to \frkg: (x_1,\dots, x_n)\to [x_1,\dots, x_n]$  satisfying the following fundamental identity:
\begin{eqnarray}\label{eq:Jacobi-11}
[x_1, \dots, x_{n-1},[y_1,y_2, \dots , y_n]]=\sumn [y_1, \dots,[x_1, \dots, x_{n-1},y_i],\dots y_n],
\end{eqnarray}
for all $x_i, y_i \in \frkg$.
It dates back to Nambu's work  \cite{Nam} to generalize the classical Hamiltonian mechanics when $n=3$:
$$\{h_1,h_2,\{x_1, x_2, x_3,z\}\}=\{\{h_1,h_2,x\},y,z\}+\{x,\{h_1,h_2,y\},z\}+\{x_1, x_2, x_3,\{h_1,h_2,z\}\},$$
where $h_1,h_2$ are Hamiltonian and $\{\cdot,\cdot,\cdot\}$ is a ternary product.
When the $n$-ary bracket in \eqref{eq:Jacobi-11} is not skew-symmetric, it is called a Leibniz $n$-algebra \cite{DT,Tak2,CKL,CLP}.

The study of $n$-Lie algebra is closely related to many fields in mathematics and mathematical physics.
For example, the foundations of the theory of Nambu-Poisson manifolds were developed by Takhtajan in \cite{Tak1}.
The application of $n$-Lie algebras to the study of the supersymmetric gauge theory of multiple M2-branes and D-branes  is given in \cite{HHM,PS}.
The investigation of 3-Lie algebras for integrable system is given in \cite{CWWZ}.

The algebraic theory of $n$-Lie algebras has been extensively studied by several authors.
In particular, the (co)homology theory for $n$-Lie algebras was introduced by Takhtajan in \cite{Tak2,DT}, Gautheron in \cite{Gau}, and Rotkiewicz in \cite{Rot}.  One compare to the homology theory introduced in \cite{GTV}.
This theory is specifically tailored to the study of formal differential geometry and formal deformations of Nambu structures. The representation and cohomology theory of Leibniz $n$-algebras is also investigated in \cite{CKL,CLP}. Additionally, for the special case when $n=3$, the research can be found in \cite{Bai,FF,LZ,Zhang0,Zhang1,Zhang2}.
Recently, non-abelian extensions of  $n$-Lie algebras and perfect  Leibniz $n$-algebras were studied in \cite{AB,CKL1}.

In \cite{LP}, Loday and Pirashvili introduced a tensor category $\LM$ of linear maps. Roughly speaking, $\LM$ consists of linear maps $f:V\to W$ as objects. A morphism between two objects $f:V\to W$ and $f':V' \to W'$ is a pair of linear maps $(\phi, \psi)$, where $\phi:V\to V'$ and $\psi: W\to W'$, such that $\psi\circ f=f'\circ \phi$. In the same paper, they also introduced the concept of Lie algebra $(M,\g,f)$ in $\LM$. This Lie algebra is a representation $\rho:\g\to \gl(M)$ of an ordinary Lie algebra $\g$ on a vector space $M$, such that $f: M\to \frkg$ is an equivariant map satisfying the equation: \begin{eqnarray} \label{cm000} f(\rho(x)(m))=[x, f(m)]_\g, \end{eqnarray}
for all $x\in \g$ and $m\in M$.
When $f$ is a surjective map, then the above equation \eqref{cm000} is equivalent to
\begin{eqnarray} \label{cm000}
 f(\rho(f(m))(n))=[f(m), f(n)]_\g,
  \end{eqnarray}
  for all $m, n\in M$. This is called an embedding tensor or tensor hierarchy algebra in \cite{kotov-strobl,lavau-p,Pal}.
 Kurdiani constructed a novel cohomology theory for Lie algebras in $\LM$ using the iterated cone constructions in \cite{Kur}, Rovi studied Lie algebroids in this category \cite{Rov}.

In this paper, we introduce the concept of $n$-Lie algebras in $\LM$.
 Let ${\g}$ be an $n$-Lie algebra, ${M}$ be a vector space, and $\rho$ a representation of ${\g}$ on ${M}$. We define an $n$-Lie algebra in $\LM$ as an equivariant map $f: M\to \frkg$ satisfying the following condition: \begin{eqnarray} \label{cm001} f(\rho(x_1,\dots,x_{n-1})(m))&=&[x_1,\dots, x_{n-1}, f(m)]_\g, \end{eqnarray} for all $x\in \g$ and $m\in M$, where $\rho:\wedge^{n-1}\g\to \gl(M)$ is a representation of $\g$. When $n=2$, we recover Loday and Pirashvili's Lie algebra $(M,\g,f)$ in $\LM$ mentioned above. Therefore, the results of this paper are an $n$-order generalization of the results in \cite{LP,Kur}. Additionally, we show that this new concept of $n$-Lie algebras in $\LM$ gives rise to Leibniz $n$-algebras. Consequently, we investigate the representation and cohomology theory for this type of $n$-Lie algebras. Moreover, this paper provides a detailed exposition of the construction and properties of this cohomology theory. In a key observation, we find that the cohomology theory constructed by Kurdiani in \cite{Kur} can be extended to the case of Leibniz algebras and $n$-Lie algebras in $\LM$. Finally, we study their infinitesimal deformations and abelian extension theory in detail as applications.

The organization of this paper is as follows. In Section 2, we review some basic facts about $n$-Lie algebras and Leibniz $n$-algebras. In Section 3, we introduce the concept of $n$-Lie algebras in $\LM$ and study its elementary properties. We investigate the representation and cohomology theory of $n$-Lie algebras in $\LM$. Low-dimensional cohomologies are given in detail in this section. In Section 4, we prove that equivalent classes of abelian extensions are in one-to-one correspondence with the elements of the second cohomology groups. In Section~5, we study infinitesimal deformations of $n$-Lie algebras in $\LM$. The notion of Nijenhuis operators is introduced to describe trivial deformations.

\section{Cohomology of $n$-Lie algebras}\label{sec:cohom}

First, we recall some facts about $n$-Lie algebras and Leibniz algebras, basic references are \cite{Gau,Fil1,Tak1,Tak2,Loday}.

\begin{defi}[\cite{Fil1}]
An $n$-Lie algebra is a vector space $\frkg$ with an $n$-ary totally skew-symmetric linear map ($n$-bracket)
$\wedge{}^n\frkg\to \frkg: (x_1,\dots, x_n)\to [x_1,\dots, x_n]$ satisfying the following fundamental identity:
\begin{eqnarray}\label{eq:Jacobi-n}
[x_1, \dots, x_{n-1},[y_1,y_2, \dots , y_n]]=\sumn [y_1, \dots,[x_1, \dots, x_{n-1},y_i],\dots y_n],
\end{eqnarray}
for all $x_i, y_i \in \frkg$.
If the $n$-bracket is not skew-symmetric, then it is called a Leibniz $n$-algebra.
\end{defi}

Let $\g$ and $\g'$ be two  $n$-Lie algebras.
An $n$-Lie algebra homomorphism is a linear map $\phi:\g\to\g'$  such that following condition holds
\begin{eqnarray}
   \phi([x_1, \dots, x_n])&=&[\phi(x_1), \dots, \phi(x_n)]'.
\end{eqnarray}

Denote by $x=(x_1, \dots,x_{n-1})$ and $\ad(x) y_i=[x_1, \dots,x_{n-1}, y_i]$, then the above equality \eqref{eq:Jacobi-n} can be rewritten in the form
\begin{eqnarray}\label{eq:Jacobi-3'}
\ad(x)[y_1, y_2,\dots,y_n]=\sum_{i=1}^n[y_1,\dots,\ad(x)y_i,\dots,y_n].
\end{eqnarray}

Put $\L\triangleq\wedge^{n-1} \frkg$.
The elements in $\L$ are called fundamental objects.
Define an operation on fundamental objects by
\begin{eqnarray}\label{eq:fundamental}
x\circ y=\sum_{i=1}^{n-1}(y_1,\dots,[x_1,\dots,x_{n-1},y_i],\dots,y_{n-1}).
\end{eqnarray}
It can be  proved as in \cite{DT} that $\L$ is a Leibniz algebra  satisfying the following Leibniz rule
$$x\circ (y\circ z)= (x\circ y)\circ z+y\circ (x\circ z),$$
and
$$\ad(x)\ad(y) w-\ad(y)\ad(x) w=\ad(x\circ y) w,$$ 
for all $x,y,z\in\L, w\in \frkg$, i.e. $\ad: \L \to \End(\frkg)$ is a homomorphism of Leibniz algebras.

Recall that for a Leibniz algebra ${L}$, a representation is a vector space ${M}$
together with two bilinear maps
$$[\cdot,\cdot]_L:{L}\otimes {M}\to {M} \,\,\, \text{and}\,\,\,  [\cdot,\cdot]_R: {M}\otimes {L}\to {M},$$
satisfying the following three axioms
\begin{itemize}
\item[$\bullet$] {\rm(LLM)}\quad  $[x\circ y, m]_L=[x, [y, m]_L]_L-[y,[x, m]_L]_L$,
\item[$\bullet$] {\rm(LML)}\quad  $[m, x\circ y]_R=[[m, x]_R, y]_R+[x, [m, y]_R]_L$,
\item[$\bullet$] {\rm(MLL)}\quad  $[m, x\circ y]_R=[x,[m, y]_R]_L-[[x,m]_L, y]_R.$
\end{itemize}
By (LML) and (MLL) we also have
\begin{itemize}
\item[$\bullet$] {\rm(MMM)}\quad  $[[m, x]_R, y]_R+[[x,m]_L, y]_R=0.$
\end{itemize}
In fact, assume (LLM), one of (LML),(MLL),(MMM) can be derived from the other two.

The cohomology of  Leibniz algebras was defined in  \cite{Loday} as follows.
Let ${L}$ be a Leibniz algebra, ${M}$ be a representation of ${L}$. We denote the cochain complex by
$$ CL^{p}(L,M)\triangleq\Hom(\otimes^p L,M).$$
Define the coboundary operator
$$d_{p-1}:CL^{p-1}(L,M)\to CL^{p}(L,M)$$
by
\begin{eqnarray*}
&&d_{p-1}\omega(x_1,x_2,\dots,x_p)\\
&=&\sum_{i=1}^{p-1}(-1)^{i+1}[x_i,\omega(x_1,\dots,\widehat{x_i},\dots,x_p)]_L+(-1)^p[\omega(x_1,\dots,x_{p-1}),x_p]_R\\
&&+\sum_{1\leq i<j\leq p}(-1)^{i}\omega(x_1,\dots,\widehat{x_i},\dots,x_{j-1},x_i\circ x_j,x_{j+1},\dots,x_p),
\end{eqnarray*}
for all $x_i\in \L$. It is proved that  $d\circ d=0$.
For more details, see \cite{Loday}.

\begin{defi} Let $\frkg$ be an $n$-Lie algebra and $V$ be a vector space. Then $(V, \rho)$ is called a representation
of $\frkg$ if the following two conditions (R1) and (R2)  are satisfied:
\begin{itemize}
\item[{\rm(R1)}]\quad $[\rho(x_1,\dots,x_{n-1}),\rho(y_1,\dots,y_{n-1})]=\rho((x_1,\dots,x_{n-1})\circ (y_1,\dots,y_{n-1}))$,
\item[{\rm(R2)}]\quad $\rho(x_1,\dots,x_{n-2},[y_1,\dots,y_{n}])=$
            $\sumn(-1)^{n-i}\rho(y_1,\dots,\widehat{y_i}\dots, y_{n})\rho(x_1,\dots,x_{n-2},y_i)$.
\end{itemize}
\end{defi}

\begin{rmk}
The condition (R1) is equivalent to the following equation
\begin{eqnarray}
\notag&&\rho(x_1,\dots,x_{n-1})\rho(y_1,\dots,y_{n-1})\\
&=&\sumnoi \rho(y_1,\dots, [x_1,\dots,x_{n-1},y_i],\dots,y_{n-1}))+\rho(y_1,\dots,y_{n-1})\rho(x_1,\dots,x_{n-1}).
\end{eqnarray}
When only this condition is satisfied,  we call $(V, \rho)$ a weak representation of $\frkg$.
\end{rmk}

By direct computation, it is easy to obtain the following result.
\begin{pro}
Given a representation $\rho$ of the $n$-Lie algebra $\g$ on the vector space $V$. Define a skew-symmetric $n$-bracket on $\g\oplus V$ by
\begin{eqnarray}
[x_1+ u_1, \dots, x_n+u_n]=[x_1,\dots,x_{n}]+ \sumn(-1)^{n-i} \rho (x_1,\dots, \widehat{x_i},\dots, x_n)(u_i),
\end{eqnarray}
then $(\g\oplus V, [,])$ is an $n$-Lie algebra, which is called the semidirect product of $n$-Lie algebra $\g$ with $V$.
We denote this $n$-Lie algebra by $\g\ltimes V$.
Define a non-skew-symmetric $n$-bracket on $\g\oplus V$ by
\begin{eqnarray}\label{hemisemi}
[x_1+u_1, \dots, x_n+u_n]_H=[x_1,\dots,x_{n}]+\rho (x_1,\dots, x_{n-1})(u_n),
\end{eqnarray}
then $(\g\oplus V, [,]_H)$  is a Leibniz $n$-algebra, which is called the hemisemidirect product of $\g$ with $V$.
We denote this Leibniz $n$-algebra by $\g\ltimes_H V$. When $n = 2$, this hemisemidirect product  coincides with the notion defined in \cite{KW}.
\end{pro}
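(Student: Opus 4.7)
The plan is to verify, for each of the two brackets, the fundamental identity \eqref{eq:Jacobi-n} by writing typical elements as $X_i=x_i+u_i$ and $Y_j=y_j+v_j$, expanding both sides, and matching the $\g$-component and the $V$-component separately. In both cases the $\g$-component reduces to the fundamental identity for $\g$ itself. Skew-symmetry of the semidirect bracket is immediate from the sign $(-1)^{n-i}$, which makes the $V$-sum alternating in the same way as the $\g$-bracket; no skew-symmetry claim is made for the hemisemidirect bracket.

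For the hemisemidirect product the verification is short, because $[\cdot,\ldots,\cdot]_H$ only uses the $V$-component of its last slot. The inner bracket $[Y_1,\ldots,Y_n]_H$ contributes $\rho(y_1,\ldots,y_{n-1})v_n$ in $V$, so the LHS $V$-part equals $\rho(x_1,\ldots,x_{n-1})\rho(y_1,\ldots,y_{n-1})v_n$. On the RHS, only $v_n$ can propagate into the final outer slot: the $k=n$ summand yields $\rho(y_1,\ldots,y_{n-1})\rho(x_1,\ldots,x_{n-1})v_n$, and each $k<n$ summand yields $\rho(y_1,\ldots,[x_1,\ldots,x_{n-1},y_k],\ldots,y_{n-1})v_n$. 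These combine via the equivalent form of (R1) stated in the Remark to give $\rho(x_1,\ldots,x_{n-1})\rho(y_1,\ldots,y_{n-1})v_n$, matching the LHS.

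For the semidirect product I would track the $V$-component as a sum of terms linear in $u_i$ (for $i\le n-1$) and linear in $v_j$ (for $j\le n$). The coefficient of $u_i$ on the RHS gathers across the $k$-sum into $(-1)^{n-i}\sum_{k=1}^{n}(-1)^{n-k}\rho(y_1,\ldots,\widehat{y_k},\ldots,y_n)\rho(x_1,\ldots,\widehat{x_i},\ldots,x_{n-1},y_k)u_i$, which by (R2) collapses to $(-1)^{n-i}\rho(x_1,\ldots,\widehat{x_i},\ldots,x_{n-1},[y_1,\ldots,y_n])u_i$, matching the LHS. The coefficient of $v_j$ splits into two pieces: the $k=j$ piece contributes $(-1)^{n-j}\rho(y_1,\ldots,\widehat{y_j},\ldots,y_n)\rho(x_1,\ldots,x_{n-1})v_j$, while the $k\neq j$ pieces assemble, by the defining formula \eqref{eq:fundamental} for $\circ$ on $\L$, into $(-1)^{n-j}\rho((x_1,\ldots,x_{n-1})\circ(y_1,\ldots,\widehat{y_j},\ldots,y_n))v_j$; applying (R1) rewrites this as a commutator, and the two pieces sum to $(-1)^{n-j}\rho(x_1,\ldots,x_{n-1})\rho(y_1,\ldots,\widehat{y_j},\ldots,y_n)v_j$, again matching the LHS.

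The main obstacle is sign-and-index bookkeeping in the semidirect case: the $(-1)^{n-i}$ factors from the definition have to align with those in (R2), and the $k\neq j$ contributions must be reorganized as the $\L$-operation $\circ$ before (R1) can be invoked. Once the conventions of the paper are adopted the verification is mechanical; conceptually the content is just that (R1) handles the commutator-type contributions while (R2) handles the ones in which an inner $\g$-bracket has been created.
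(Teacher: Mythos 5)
Your proposal is correct and is precisely the direct verification the paper alludes to (the paper states the proposition with only the remark ``by direct computation'' and omits the details): the $\g$-components reduce to the fundamental identity of $\g$, the $u_i$-coefficients match via (R2), the $v_j$-coefficients match via (R1) after reassembling the $k\neq j$ terms as $\rho((x_1,\dots,x_{n-1})\circ(y_1,\dots,\widehat{y_j},\dots,y_n))$, and the hemisemidirect case uses only the weak form of (R1). No gaps.
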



For example, given an $n$-Lie algebra $\frkg$, there is a natural {adjoint representation} on itself. The corresponding representation
$\ad(x_1,\dots,x_{n-1})$ is given by
\begin{eqnarray*}
\ad(x_1,\dots,x_{n-1})x_n=[x_1,\dots,x_{n-1},x_n].
\end{eqnarray*}
Thus, we obtain  an $n$-Lie algebra $\frkg\ltimes \frkg$ by
\begin{eqnarray}
[x_1+ y_1, \dots, x_n+y_n]=[x_1,\dots,x_{n}]+ \sumn(-1)^{n-i}[x_1,\dots, \widehat{x_i},\dots, x_n,y_i]
\end{eqnarray}
and a Leibniz $n$-algebra $\frkg\ltimes_H \frkg$ by
\begin{eqnarray}
[x_1+y_1, \dots, x_n+y_n]_H=[x_1,\dots,x_{n}]+[x_1,\dots, x_{n-1}, y_n]
\end{eqnarray}
for all $x_i, y_i \in \frkg$.

Let $\frkg$ be an $n$-Lie algebra and $\rho$ be a representation  of  $\frkg$ on $\frkh$.
It can be proved similar as in \cite{CKL,CLP} that  there is a representation of $L=\wedge^{n-1}\g$ as Leibniz algebra on $\Hom(\g,V)$ under the following maps:
\begin{eqnarray*}
&&[\cdot,\cdot]_L:\L \otimes \Hom(\g,V)\to \Hom(\g,V),\\
&&[\cdot,\cdot]_R: \Hom(\g,V)\otimes \L \to \Hom(\g,V),
\end{eqnarray*}
by
\begin{eqnarray}
\label{eq:leibniz01}{[(x_1,\dots,x_{n-1}),\phi]_L}(x_n)&=&\rho(x_1,\dots,x_{n-1})\phi(x_n)-\phi([x_1,\dots,x_{n-1},x_n]),\\
\notag\label{eq:leibniz02}{[\phi,(x_1,\dots,x_{n-1})]_R}(x_n)&=&\phi([x_1,\dots,x_n])-\sum_{i=1}^n(-1)^{n-i}\rho(x_1,\dots, \widehat{x_i},\dots x_n)\phi(x_i),\\
\end{eqnarray}
for all $\phi\in \Hom(\g,V), x_i\in \frkg$.
This observation is important because it provides a new characterization of the representation of $n$-Lie algebras in relation to the representation of Leibniz algebras. We will now establish that the converse is also valid.

\begin{pro}\label{pro:rep}
Let $\frkg$ be an $n$-Lie algebra. Then $\Hom(\g,V)$ equipped with the above two maps
$[\cdot,\cdot]_L$ and $[\cdot,\cdot]_R$ is a representation of Leibniz algebra $\L=\wedge^{n-1}\g$
if and only if the conditions (R1) and (R2) are satisfied, i.e. $(V,\rho)$ is a representation of  $\frkg$.
\end{pro}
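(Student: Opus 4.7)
The plan is to verify the three Leibniz-representation axioms (LLM), (LML), (MLL) applied to an arbitrary $\phi\in\Hom(\g,V)$ and match them against (R1) and (R2). Throughout, I would write $x=(x_1,\dots,x_{n-1})$ and $y=(y_1,\dots,y_{n-1})$ for elements of $\L=\wedge^{n-1}\g$, and evaluate each identity at an auxiliary $z\in\g$; arbitrariness of $\phi$ and $z$ promotes pointwise identities to operator identities on $V$. Since the remark following the definition of a Leibniz representation shows that (LLM) together with either (LML) or (MLL) implies the third axiom, it suffices to verify (LLM) and (MLL).

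\textbf{Step 1: (LLM) is equivalent to (R1).} Unfolding $[x\circ y,\phi]_L(z)$ via \eqref{eq:leibniz01} gives $\rho(x\circ y)\phi(z)-\phi(\ad(x\circ y)z)$. Expanding $[x,[y,\phi]_L]_L(z)-[y,[x,\phi]_L]_L(z)$ by two applications of \eqref{eq:leibniz01}, the cross terms of the form $\rho(x)\phi(\ad(y)z)$ and $\rho(y)\phi(\ad(x)z)$ cancel pairwise, leaving $[\rho(x),\rho(y)]\phi(z)-\phi([\ad(x),\ad(y)]z)$. The identity $\ad(x\circ y)=[\ad(x),\ad(y)]$ (established in the excerpt immediately after \eqref{eq:fundamental}) matches the $\phi$-terms on both sides, and (LLM) collapses to $\rho(x\circ y)\phi(z)=[\rho(x),\rho(y)]\phi(z)$, which is precisely (R1) as $\phi$ and $z$ vary.

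\textbf{Step 2: assuming (R1), (MLL) is equivalent to (R2).} I would unfold both $[\phi,y]_R(z)$ and $[x,\phi]_L(z)$ via \eqref{eq:leibniz01}--\eqref{eq:leibniz02}, then apply the outer operations $[x,\cdot]_L$ and $[\cdot,y]_R$ respectively, producing two nested alternating sums. Grouping the resulting terms by the number of $\rho$-factors they carry, the pure-$\phi$ contributions (those of the form $\phi([\dots])$) cancel using the fundamental identity \eqref{eq:Jacobi-n} applied to $[x_1,\dots,x_{n-1},[y_1,\dots,y_{n-1},z]]$, while the two-$\rho$ contributions cancel by (R1). What survives is an operator identity whose $\phi(z)$-coefficient is exactly the $(n-2)$-argument shape prescribed by (R2), and whose $\phi(y_i)$-coefficients reproduce (R2) with the roles of $z$ and $y_i$ permuted. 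Running the same expansions in reverse, (R1)--(R2) imply (LLM) and (MLL), and (LML) follows from the remark.

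The main technical obstacle is the bookkeeping in Step 2: each of $[x,\phi]_L$ and $[\phi,y]_R$ carries an alternating sum of length up to $n$, so the nested brackets generate $O(n^2)$ terms indexed by slot-pairs, which must be partitioned according to which slot is being resolved by the fundamental identity, with the sign $(-1)^{n-i}$ tracked carefully across any reindexing that interchanges a $y$-slot with the test argument $z$. This is where the $n$-Lie generalization genuinely extends the $n=2$ computation of Kurdiani in \cite{Kur}.
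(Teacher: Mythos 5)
Your Step 1 is correct and coincides with the paper's computation: the cross terms cancel, and (LLM) reduces to (R1) via the identity $\ad(x\circ y)=[\ad(x),\ad(y)]$. The gap is in your reduction of the remaining axioms. The remark you invoke states that, given (LLM), any \emph{one} of (LML), (MLL), (MMM) can be derived from the \emph{other two}; it does not say that (LLM) together with a single one of them yields the rest. Indeed (LML)$-$(MLL) is precisely (MMM), and (MMM) is a genuinely nontrivial condition for these structure maps: one computes
\begin{equation*}
[x,\phi]_L+[\phi,x]_R=-\sum_{i=1}^{n-1}(-1)^{n-i}\rho(x_1,\dots,\widehat{x_i},\dots,x_{n-1},\cdot)\,\phi(x_i),
\end{equation*}
which does not vanish in general, so $[[\phi,x]_R,y]_R+[[x,\phi]_L,y]_R=0$ is a real constraint. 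Consequently, verifying only (LLM) and (MLL) leaves (LML) unestablished in the ``if'' direction: from (R1) and (R2) you would obtain (LLM) and (MLL) but would still owe a proof of (MMM) or, equivalently, of (LML). You must check (LLM) plus \emph{two} of the three remaining axioms.

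The paper's route differs from yours at exactly this point and is cleaner: after (LLM) it verifies (MMM) directly, substituting the displayed sum into $[\cdot\,,y]_R$ and showing that (MMM) holds if and only if (R2) does; only then does it appeal to the two-out-of-three remark to dispose of (LML)/(MLL). If you supplement your Step 2 with this (MMM) computation (or replace Step 2 by it and then derive (MLL) from the remark), your argument closes. Your direct attack on (MLL) is not wrong --- it is consistent with the paper's closing assertion that (MLL) is equivalent to (R1) and (R2) together --- but on its own it does not finish the proof.
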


\begin{proof}
For $x=(x_1,\dots,x_{n-1}),\, y=(y_1,\dots,y_{n-1})\in\L, \, y_n\in \frkg$, we compute the equality
\begin{equation}\label{leb01}
[x\circ y, \phi]_L(y_n)=[x, [y, \phi]_L]_L(y_n)-[y,[x, \phi]_L]_L(y_n).
\end{equation}
By definition \eqref{eq:leibniz01}, the left-hand side of \eqref{leb01} is equal to
\begin{eqnarray*}
[x\circ y, \phi]_L(y_n)=\rho(x\circ y)\phi(y_n)-\phi(\ad(x\circ y)y_n),
\end{eqnarray*}
and the right-hand side  of \eqref{leb01} is equal to
\begin{eqnarray*}
&&[x, [y, \phi]_L]_L(y_n)-[y,[x, \phi]_L]_L(y_n)\\
&=&\rho(x)[y, \phi]_L(y_n)-[y, \phi]_L(\ad(x)y_n)-\rho(y)[x, \phi]_L(y_n)+[x, \phi]_L(\ad(y)y_n)\\
&=&\rho(x)\rho(y)\phi(y_n)-\rho(x)\phi(\ad(y)y_n)-\rho(y)\phi(\ad(x)y_n)+\phi(\ad(y)\ad(x)y_n)\\
&&-\rho(y)\rho(x)\phi(y_n)+\rho(y)\phi(\ad(x)y_n)+\rho(x)\phi(\ad(y)y_n)-\phi(\ad(x)\ad(y)y_n)\\
&=&\rho(x)\rho(y)\phi(y_n)+\phi(\ad(y)\ad(x)y_n)-\rho(y)\rho(x)\phi(y_n)-\phi(\ad(x)\ad(y)y_n)\\
&=&[\rho(x),\rho(y)]\phi(y_n)-\phi([\ad(x),\ad(y)]y_n).
\end{eqnarray*}
Since $\ad: \L \to \End(\frkg)$ is a homomorphism of Leibniz algebras,
thus (LLM) holds for $[\cdot,\cdot]_L$ if and only if (R1) is valid for $\rho$.

Next, we compute the equality
$$[[\phi, x]_R, y]_R(y_n)+[[x,\phi]_L, y]_R(y_n)=0.$$
By \eqref{eq:leibniz01} and \eqref{eq:leibniz02} we have
\begin{eqnarray*}
&&{[(x_1,\dots,x_{n-1}),\phi]_L(z)+[\phi,(x_1,\dots,x_{n-1})]_R}(z)\\
&=&-\sumnoi(-1)^{n-i}\rho(x_1,\dots,\widehat{x_i},\dots,x_{n-1},w)\phi(x_i),
\end{eqnarray*}
thus
\begin{eqnarray*}
&&{[(x_1,\dots,x_{n-1}),\phi]_L+[\phi,(x_1,\dots,x_{n-1})]_R}\\
&=&-\sumnoi(-1)^{n-i}\rho(x_1,\dots,\widehat{x_i},\dots,x_{n-1},\cdot)\phi(x_i),
\end{eqnarray*}
where we denote $\rho(x_1,\dots,\widehat{x_i},\dots,x_{n-1},\cdot)\phi(x_i):\frkg\to \frkh$ by
$$\rho(x_1,\dots,\widehat{x_i},\dots,x_{n-1},\cdot)\phi(x_i)(z)=\rho(x_1,\dots,\widehat{x_i},\dots,x_{n-1},w)\phi(x_i).$$
Now replace $x_i$ by $y_i$, $\phi$ by $-\sumnoi(-1)^{n-i}\rho(x_1,\dots,\widehat{x_i},\dots,x_{n-1},\cdot)\phi(x_i)$ in \eqref{eq:leibniz02}, then we obtain
\begin{eqnarray*}
&&[[\phi, x]_R+[x,\phi]_L, y]_R(y_n)\\
&=&-\sumnoi(-1)^{n-i}\rho(x_1,\dots,\widehat{x_i},\dots,x_{n-1},\cdot)\phi(x_i)([y_1,\dots,y_n])\\
&&+\sumn(-1)^{n-j}\rho(y_1,\dots,\widehat{y_j},\dots,y_n)\Big\{\sumnoi(-1)^{n-i}\rho(x_1,\dots,\widehat{x_i},\dots,x_{n-1},\cdot)\phi(x_i)(y_j)\Big\}\\
&=&-\sumnoi(-1)^{n-i}\rho(x_1,\dots,\widehat{x_i},\dots,x_{n-1},[y_1,\dots,y_n])\phi(x_i)\\
&&+\sumnoi(-1)^{n-i}(\sumn(-1)^{n-j}\rho(y_1,\dots,\widehat{y_j},\dots,y_n)\rho(x_1,\dots,\widehat{x_i},\dots,x_{n-1},y_j))\phi(x_i).
\end{eqnarray*}
Thus (MMM) is valid for $[\cdot,\cdot]_L$ and $[\cdot,\cdot]_R$ if and only if (R2) is valid for $\rho$.

At last, one can check that (LML) or (MLL) is valid for $[\cdot,\cdot]_L$ and $[\cdot,\cdot]_R$ if and only if (R1) and (R2) are valid for $\rho$.
\end{proof}

Next, we define the cohomology theory of an $n$-Lie algebra in virtue of the cohomology theory of Leibniz algebra as follows.
A $p$-cochain on $\mathfrak{g}$ with coefficients in a representation $(V, \rho)$ is a multilinear map $\omega: \wedge^{p(n-1)+1} \mathfrak{g} \to V$. The space of $p$-cochains is denoted as $C^p(\mathfrak{g}, V)$.

\begin{thm}[\cite{CKL,CLP}]\label{theorem-cohomology}
Let $\frkg$ be an $n$-Lie algebra and $(V,\rho)$ be a representation of $\frkg$.
Then there exists a cochain complex $\left\{C(\frkg,\frkh)=\bigoplus_{p\geq 0}C^p(\frkg,\frkh),\delta \right\}$
which is a subcomplex of the Leibniz algebra $L=\wedge^{n-1}\frkg$ with coefficients in $\Hom(\frkg,V)$
such that the coboundary operator is given by
\begin{eqnarray}
\notag&&\delta_{p}\omega\left(X_1, \ldots, X_p, z\right) \\
\notag&= & \sum_{i=1}^p(-1)^i \omega\left(X_1, \dots, \widehat{X_i}, \dots, X_p,\left[X_i, z\right]\right) \\
\notag&& +\sum_{i=1}^p(-1)^{i+1} \rho\left(X_i\right) \omega\left(X_1, \dots, \widehat{X_i}, \dots, X_p, z\right) \\
\notag&& +\sum_{i=1}^{n-1}(-1)^{n+p-i+1} \rho\left(x^p_1, x^p_2, \dots, \hat{x}^p_i, \dots, x^p_{n-1}, z\right) \omega\left(X_1, \dots, X_{p-1}, x^p_i\right)\\
\label{ncohom}&& +\sum_{1 \leq i<j}(-1)^i \omega\left(X_1, \dots, \widehat{X_i}, \dots, X_{j-1}, X_i \circ X_j, X_{j+1}, \dots, X_p, z\right)
\end{eqnarray}
for all $X_i=\left(x_i^1, x_i^2, \dots, x_i^{n-1}\right) \in \wedge^{n-1} \mathfrak{g}$ and $z \in \mathfrak{g}$
such that $\delta\circ \delta=0$.
Thus, we obtain the cohomology group
$H^{\bullet}({\g}, V)=Z^{\bullet}({\g}, V)/B^{\bullet}({\g}, V)$ where $Z^{\bullet}({\g}, V)$ is the space of cocycles
and $B^{\bullet}({\g}, V)$ is the space of coboundaries.
\end{thm}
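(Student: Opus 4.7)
The plan is to realize $C^\bullet(\frkg,V)$ as a subcomplex of the Leibniz cohomology complex $CL^\bullet(\L,\Hom(\frkg,V))$ afforded by Proposition~\ref{pro:rep}, and then to transport $d\circ d=0$ from the Leibniz side to $\delta\circ\delta=0$. The embedding sends a $p$-cochain $\omega\colon\wedge^{p(n-1)+1}\frkg\to V$ to
$$\tilde\omega\in \Hom(\otimes^p\L,\Hom(\frkg,V))=CL^p(\L,\Hom(\frkg,V)),\qquad \tilde\omega(X_1,\dots,X_p)(z):=\omega(X_1,\dots,X_p,z),$$
where $X_i=(x_i^1,\dots,x_i^{n-1})\in \L=\wedge^{n-1}\frkg$ and $z\in\frkg$; this is well-defined and injective by multilinearity of $\omega$.

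Next I would compute the Leibniz coboundary $d_p\tilde\omega$ using the explicit left and right bimodule actions $[\cdot,\cdot]_L,[\cdot,\cdot]_R$ from \eqref{eq:leibniz01} and the line immediately below it. Evaluating $[X_i,\tilde\omega(\dots)]_L(z)$ produces a $\rho(X_i)$-term and an ``$[X_i,z]$-inserted'' term, while evaluating $[\tilde\omega(\dots),X_{p+1}]_R(z)$ produces the insertion term $\tilde\omega(\dots)([X_{p+1},z])$ together with the sum $\sum_{i=1}^{n}(-1)^{n-i}\rho(\dots)\tilde\omega(\dots)$. Crucially, the $i=n$ summand of the latter completes the $\rho(X_i)$-family from the range $1\le i\le p$ to $1\le i\le p+1$, while the $1\le i\le n-1$ summands produce exactly the new ``$x_{p+1}^i$-boundary'' family occupying the third line of \eqref{ncohom}, which is peculiar to $n$-Lie cohomology. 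The ``$X_i\circ X_j$'' family transports verbatim from the Leibniz formula. Matching signs carefully then yields the identification $\widetilde{\delta\omega}=d\tilde\omega$.

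To close, I would verify that $\delta$ sends $C^p(\frkg,V)$ into $C^{p+1}(\frkg,V)$: each term on the right-hand side of \eqref{ncohom} is manifestly multilinear and inherits $(n-1)$-alternation within each $X_i$-block from the wedge structure of $\L$, while axioms (R1), (R2) ensure the $\rho$-compositions produced after symbolic manipulation close up correctly. With this subcomplex property in hand, $\delta\circ\delta=0$ transports from the Leibniz identity $d\circ d=0$ through the embedding $\omega\mapsto\tilde\omega$, yielding the desired cohomology groups $H^\bullet(\frkg,V)=Z^\bullet/B^\bullet$. The principal obstacle is purely combinatorial: keeping track of signs when splitting $\sum_{i=1}^{n}$ in $[\cdot,\cdot]_R$ into the isolated $i=n$ piece (which fuses with the $\rho(X_i)$-family) and the $1\le i\le n-1$ pieces (which supply the genuinely new boundary family), and aligning the indices with the statement \eqref{ncohom}; once this matching is arranged, no further computation is required, since the heart of the argument already resides in Proposition~\ref{pro:rep}.
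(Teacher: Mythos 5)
Your proposal follows essentially the same route as the paper: it identifies $C^{p+1}(\frkg,V)$ with $CL^{p}(\L,\Hom(\frkg,V))$ via $\omega\mapsto\tilde\omega$, expands the Leibniz coboundary using the explicit actions \eqref{eq:leibniz01}--\eqref{eq:leibniz02}, splits the $\sum_{i=1}^{n}$ in the right action into the $i=n$ piece and the $1\le i\le n-1$ pieces to recover formula \eqref{ncohom}, and transports $d\circ d=0$. This matches the paper's proof, with the added (and welcome) remark that one should check $\delta$ preserves the subspace of block-alternating cochains, a point the paper leaves implicit.
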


\begin{proof}
For the convenience of the reader, we provide a detailed proof for the case of $n$-Lie algebras since the formula \eqref{ncohom} in Theorem \ref{theorem-cohomology} is not explicitly given in \cite{CKL,CLP}. Now, to define the cochain complex for an $n$-Lie algebra $\frkg$ with coefficients in ${V}$, we consider a subcomplex of the Leibniz algebra $\wedge^{n-1}\frkg$ with coefficients in $\Hom(\frkg,V)$:
\begin{eqnarray*}
C^{p+1}(\frkg,\frkh)&\triangleq&
\Hom\left(\left(\otimes^p(\wedge{}^{n-1}\frkg\right)\otimes\frkg,\frkh\right)\cong\Hom\left(\otimes^p(\wedge{}^{n-1}\frkg),\Hom(\g,V)\right)\\
&=&\Hom\left(\otimes^{p} L,\Hom(\g,V)\right)=CL^p(L,\Hom(\g,V)).
\end{eqnarray*}
By the definition of cohomology of Leibniz algebras, we get
$$d_{p-1}:CL^{p-1}(L,\Hom(\g,V))\to CL^{p}(L,\Hom(\g,V)),$$
\begin{eqnarray*}
&&\delta_{p}\omega(X_1,X_2,\dots,X_p,z)=d_{p-1}\omega(X_1,X_2,\dots,X_p)(z)\\
&=&\sum_{i=1}^{p-1}(-1)^{i+1}[X_i,\omega(X_1,\dots,\widehat{X_i},\dots,X_p)]_L(z)+(-1)^p[\omega(X_1,\dots,X_{p-1}),X_p]_R(z)\\
&&+\sum_{1\leq i<j\leq p}(-1)^{i}\omega(X_1,\dots,\widehat{X_i}, \dots, X_{j-1},X_i\circ X_j,X_{j+1}, \dots,X_p)(z)\\
&=&\sum_{i=1}^{p-1}(-1)^{i+1}\{\rho(X_i)\omega(X_1,\dots,\widehat{X_i},\dots,X_p,z)-\omega(X_1,\dots,\widehat{X_i},\dots,X_p,[X_i,z])\}\\
&&+(-1)^p\omega(X_1,\dots,X_{p-1},[X_p,z])\\
&&-(-1)^p\sumn(-1)^{n-i}\rho(x^p_1, x^p_2, \dots, \hat{x}^p_i, \dots, x^p_{n-1}, z)\omega(X_1,\dots,X_{p-1},x^i_p)\\
&&+\sum_{1\leq i<j\leq p}(-1)^{i}\omega(X_1,\dots,\widehat{X_i}, \dots, X_{j-1},X_i\circ X_j,X_{j+1}, \dots,X_p)(z)\\
&= &\sum_{i=1}^p(-1)^i \omega\left(X_1, \dots, \widehat{X_i}, \dots, X_p,\left[X_i, z\right]\right) \\
&& +\sum_{i=1}^p(-1)^{i+1} \rho\left(X_i\right) \omega\left(X_1, \dots, \widehat{X_i}, \dots, X_p, z\right) \\
&& +\sum_{i=1}^{n-1}(-1)^{n+p-i+1} \rho\left(x^p_1, x^p_2, \dots, \hat{x}^p_i, \dots, x^p_{n-1}, z\right) \omega\left(X_1, \dots, X_{p-1}, x^p_i\right)\\
&& + \sum_{1 \leq i<j}(-1)^i \omega\left(X_1, \dots, \widehat{X_i}, \dots, X_{j-1}, X_i \circ X_j, X_{j+1}, \dots, X_p, z\right)
\end{eqnarray*}
for all $X_i=\left(x^i_1, x^i_2, \dots, x^i_{n-1}\right)\in \L=\wedge{}^{n-1}\frkg,\ z\in \frkg$.
In other words, the cohomology of an $n$-Lie algebra $\frkg$ with coefficients in $\frkh$ is defined to be
the cohomology of Leibniz algebra $\L$ with coefficients in $\Hom(\g,V)$.
Thus, we obtain that the cohomology group $H^{\bullet}({\g}, V)=Z^{\bullet}({\g}, V)/B^{\bullet}({\g}, V)$ is well defined.
\end{proof}

\begin{rmk}
L. Takhtajan in \cite{Tak2,DT}, P. Gautheron in \cite{Gau}, M. Rotkiewicz in \cite{Rot}, and the authors in \cite{GTV} have previously studied the cohomology theory for $n$-Lie algebras, each from a different perspective. However, we believe that the approach presented in Theorem \ref{theorem-cohomology} is the most direct and elegant. Specifically, for 3-Lie algebras, this method has been thoroughly examined in \cite{Zhang1,Zhang2}.
\end{rmk}

According to the above definition, a 1-cochain is a map $\nu\in\Hom(\g,V)$,
a 2-cochain is a map
$\omega\in\Hom\left(\wedge^{n}\g,\frkh\right)\subseteq\Hom\left(\wedge{}^{n-1}\frkg,\Hom(\g,V)\right)$,
and the coboundary operator is given by
\begin{eqnarray}
\label{eq:cobound01}\delta_{1}\nu(X_1,z)&=&d_{0}\nu(X_1)(z)=-[\nu,X_1]_R(z),\\
\label{eq:cobound02}\delta_{2}\omega(X_1,X_2,z)&=&[X_1,\omega(X_2)]_L(z)+[\omega(X_1),X_2]_R(z)-\omega(X_1\circ X_2)(z).
\end{eqnarray}

Put $X_1=(x_1,\dots,x_{n-1})\in \L,\ z=x_n\in\frkg$ in the equality \eqref{eq:cobound01}, then by \eqref{eq:leibniz02} we get
\begin{eqnarray}\label{eq:1coc}
\delta_1\nu(x_1,\dots,x_n)&=&\sumn(-1)^{n-i}\rho(x_1,\dots,\widehat{x_i},\dots,x_n)\nu(x_i)-\nu([x_1, \dots, x_n]).
\end{eqnarray}
\begin{defi}
Let $\frkg$ be an $n$-Lie algebra and $(V, \rho)$ be an $\frkg$-module. Then a map $\nu\in\Hom(\g,V)$
is called 1-cocycle if and only if
\begin{eqnarray}\label{eq:0coc}
\nu([x_1, \dots, x_n])=\sumn(-1)^{n-i}\rho(x_1,\dots,\widehat{x_i},\dots,x_n)\nu(x_i),
\end{eqnarray}
and a map $\omega: \wedge^n\frkg\to V$ is called a 2-coboudary if there exists a map $\nu\in\Hom(\g,V)$ such that $\omega=\delta_1\nu$.
\end{defi}
Put $X_1=(x_1,\dots,x_{n-1})\in \L, X_2=(y_1,\dots,y_{n-1})\in \L$, $z=y_n\in\frkg$ in the equality \eqref{eq:cobound02}, then we get
\begin{eqnarray*}
&&\delta_{2}\omega(x_1,\dots,x_{n-1},y_1,\dots,y_{n-1},y_n)\\
&=&[x_1,\dots,x_{n-1},\omega(y_1,\dots,y_{n-1})]_L(y_n)+[\omega(x_1,\dots,x_{n-1}),y_1,\dots,y_{n-1}]_R(y_n)\\
&&-\omega((x_1,\dots,x_{n-1})\circ (y_1,\dots,y_{n-1}))(y_n).
\end{eqnarray*}
We compute the right-hand side of the above equation as follows:
\begin{eqnarray*}
&&[x_1,\dots,x_{n-1},\omega(y_1,\dots,y_{n-1})]_L(y_n)\\
&=&\rho(x_1,\dots,x_{n-1})\omega(y_1,\dots,y_{n-1},y_n)-\omega(y_1,\dots,y_{n-1},[x_1,\dots,x_{n-1},y_n]),\\[1em]
&&[\omega(x_1,\dots,x_{n-1}),y_1,\dots,y_{n-1}]_R(y_n)\\
&=&\omega(x_1,\dots,x_{n-1},[y_1,\dots,y_n])\\
&&-\sumn(-1)^{n-i}\rho(y_1,\dots,\widehat{y_i},\dots, y_3)\omega(x_1,\dots,x_{n-1},y_i),\\[1em]
&&\omega((x_1,\dots,x_{n-1})\circ (y_1,\dots,y_{n-1}))(y_n)\\
&=&\sum_{i=1}^{n-1}\omega(y_1,\dots,[x_1,\dots,x_{n-1},y_i],\dots,y_{n-1},y_n).
\end{eqnarray*}
Thus, we obtain the following Definition \ref{def:1coc}.
\begin{defi}\label{def:1coc}
Let $\frkg$ be an $n$-Lie algebra and $(V, \rho)$ be a representation. Then a map
$\omega: \wedge^n\frkg\to V$ is called a 2-cocycle if  $\forall x_i, y_i\in \frkg$,
\begin{eqnarray}\label{eq:1coc}
\nonumber&& \omega(x_1,\dots,x_{n-1},[y_1, \dots, y_n])+\rho(x_1,\dots,x_{n-1})\omega(y_1,\dots, y_n)\\
\nonumber&=&\sumn\omega(y_1,\dots,[x_1,\dots,x_{n-1}, y_i],\dots, y_n)\\
&&+\sumn(-1)^{n-i}\rho(y_1,\dots,\widehat{y_i},\dots, y_n)\omega(x_1,\dots,x_{n-1},y_i).
\end{eqnarray}
\end{defi}

\section{$n$-Lie algebras in $\LM$}
In this section, we will investigate the representation and cohomology theory of $n$-Lie algebras in $\LM$. We will begin by introducing the concept of $n$-Lie algebras in $\LM$, which recovers the concept of Lie algebras in $\LM$ studied by Loday and Pirashvili in \cite{Loday}. Furthermore, detailed information on low-dimensional cohomologies will be provided in this section.

\begin{defi}\label{defi:Lie 2}
An $n$-Lie algebra in $\LM$ consists of a linear map $\dM: M\to \frkg$, where $\g$ is an $n$-Lie algebra, $(M,\rho)$ is a representation of $\g$, and $f$ is an equivariant map satisfying the following condition:
\begin{eqnarray}\label{eq:01}
f(\rho(x_1,\dots,x_{n-1})(m))=[x_1, \dots, x_{n-1}, f(m)],
\end{eqnarray}
for all  $x\in \g$ and $m\in M$. We denote an $n$-Lie algebra in $\LM$ by $(M,\g, f)$.
\end{defi}

When $f$ is a surjective map, then the above equation \eqref{eq:01} is equivalent to
\begin{eqnarray} \label{cm000}
 f(\rho(f(m_1),\dots, f(m_{n-1}))(m_n))=[f(m_1),\dots, f(m_n)]_\g,
  \end{eqnarray}
  for all $m_i\in M$. This is called an embedding tensor of $n$-Lie algebras.

For any Leibniz $n$-algebra $\mathfrak{h}$, the quotient map $\pi:\mathfrak{h}\to\hlie$ is an $n$-Lie algebra in $\LM$, where
$\hlie$  is $\mathfrak{h}$ mod its Leibniz kernel
$$K=[\h,\dots, \h]=\operatorname{span}\{[x_1,x_2,\dots, x_n]|\exists x_i, x_j\in \mathfrak{h}, x_i=x_j\}.$$
Conversely, given an $n$-Lie algebra $(M,\g, f)$ in $\LM$, a Leibniz $n$-algebra can be obtained on $M$ using the following approach.

\begin{thm}\label{thm:main01}
For any $n$-Lie algebra  $(M,\g, f)$ in $\LM$, we obtain a Leibniz $n$-algebra on $M$  with the $n$-bracket given by:
\begin{eqnarray}\label{eq:02}
[m_1, \dots, m_n]\triangleq \rho(f(m_1),\dots, f(m_{n-1}))(m_n).
\end{eqnarray}
\end{thm}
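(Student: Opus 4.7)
The plan is to verify the fundamental identity of a Leibniz $n$-algebra directly for the bracket defined by \eqref{eq:02}, using the equivariance condition \eqref{eq:01} together with property (R1) of the representation $\rho$. Before touching the fundamental identity, I would record one preliminary consequence of equivariance: applying $f$ to \eqref{eq:02} gives
\[
f([m_1,\dots,m_n]) = f\bigl(\rho(f(m_1),\dots,f(m_{n-1}))(m_n)\bigr) = [f(m_1),\dots,f(m_n)]_{\g},
\]
so $f$ intertwines the bracket on $M$ with the $n$-Lie bracket on $\g$. This is the fact that will let me pull $f$ through the bracket inside the sum on the right-hand side.

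Next I would expand both sides of
\[
[m_1,\dots,m_{n-1},[n_1,\dots,n_n]] = \sum_{i=1}^{n}[n_1,\dots,[m_1,\dots,m_{n-1},n_i],\dots,n_n].
\]
The left-hand side becomes $\rho(f(m_1),\dots,f(m_{n-1}))\rho(f(n_1),\dots,f(n_{n-1}))(n_n)$. On the right-hand side I would separate the term $i=n$, which equals $\rho(f(n_1),\dots,f(n_{n-1}))\rho(f(m_1),\dots,f(m_{n-1}))(n_n)$, from the terms with $i<n$, where the modified slot sits inside an argument of $f$. For those terms the preliminary consequence of equivariance shows that $f([m_1,\dots,m_{n-1},n_i]) = [f(m_1),\dots,f(m_{n-1}),f(n_i)]_{\g}$, so each such term equals
\[
\rho(f(n_1),\dots,[f(m_1),\dots,f(m_{n-1}),f(n_i)]_{\g},\dots,f(n_{n-1}))(n_n).
\]

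The identity then reduces to a statement purely about $\rho$ on elements of $\g$, namely
\[
[\rho(X),\rho(Y)] = \sum_{i=1}^{n-1}\rho(y_1,\dots,[x_1,\dots,x_{n-1},y_i],\dots,y_{n-1}),
\]
for $X=(f(m_1),\dots,f(m_{n-1}))$ and $Y=(f(n_1),\dots,f(n_{n-1}))$. This is exactly the reformulation of (R1) in terms of the Leibniz bracket $X\circ Y$ on $\L = \wedge^{n-1}\g$ recorded in \eqref{eq:fundamental} and the remark following the definition of representation. Evaluating this operator identity on $n_n$ and rearranging gives precisely the equality between the left-hand side and the sum of the separated right-hand side terms.

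The only bookkeeping obstacle is to match indices cleanly when splitting the sum at $i=n$ and to ensure that the commutator $[\rho(X),\rho(Y)]$ produced by (R1) lands on the correct modified arguments; since (R1) sums exactly over the $n-1$ entries of $Y$ and the remaining term is the $i=n$ term, the accounting works without sign subtleties (the bracket here is not skew, so no alternating signs appear). No further identities are required, so this establishes the Leibniz $n$-algebra fundamental identity for $(M,[\,\cdot\,,\dots,\,\cdot\,])$.
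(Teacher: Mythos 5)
Your proposal is correct and follows essentially the same route as the paper: first note that equivariance makes $f$ a bracket homomorphism, then expand both sides of the fundamental identity, split off the $i=n$ term, pull $f$ through the inner bracket in the remaining terms, and reduce everything to (R1) in the form $\rho(X)\rho(Y)=\rho(X\circ Y)+\rho(Y)\rho(X)$. No gaps.
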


\begin{proof}
First, we prove that $f$ is an algebraic homomorphism from $M$ to $\g$. In fact, by equation \eqref{eq:01} we get
$$f([m_1, \dots, m_n])=f([f(m_1),\dots, f(m_{n-1}), m_n])=[f(m_1), \dots, f(m_n)].$$

Next, we verify that the  $n$-bracket defined by \eqref{eq:02} on  $M$ satisfying the fundamental identity:
$$[m_1,\dots, m_{n-1},[p_1, \dots, p_n]]=\sumn [p_1, \dots [m_1,\dots, m_{n-1}, p_i]\dots, p_n]].$$
The left-hand side of the above equation is equal to
\begin{eqnarray*}
&&[m_1,\dots, m_{n-1},[p_1, \dots, p_{n-1}, p_n]]\\
&=&[f(m_1),\dots, f(m_{n-1}),[f(p_1), \dots, f(p_{n-1}), p_n]]\\
&=&\rho(f(m_1), \dots, f(m_{n-1}))\rho(f(p_1), \dots, f(p_{n-1}))(p_n),
\end{eqnarray*}
and the right-hand side   is equal to
\begin{eqnarray*}
&&\sumn [p_1, \dots, [m_1, \dots, m_{n-1},p_{i}], \dots, p_n]]\\
&=&\sumn [f(p_1), [f(m_1),\dots, f(m_{n-1}),f(p_i)], \dots, f(p_{n-1}), p_n]]\\
&=&\sumnoi \rho(f(p_1), [f(m_1),\dots, f(m_{n-1}),f(p_i)], \dots, f(p_{n-1}))(p_n)\\
&&+\rho(f(p_1), \dots, f(p_{n-1}))\rho(f(m_1), \dots, f(m_{n-1}))(p_n)\\
&=&\sumnoi \rho((f(m_1),\dots, f(m_{n-1}))\circ (f(p_1), \dots, f(p_{n-1}))(p_n)\\
&&+\rho(f(p_1), \dots, f(p_{n-1}))\rho(f(m_1), \dots, f(m_{n-1}))(p_n),
\end{eqnarray*}
Thus, the two sides are equal to each other because $(M,\rho)$ is a representation of $\g$.
The proof is completed.
\end{proof}


Let $(M,\g, f)$ and $(M',\g', f')$ be two $n$-Lie algebras in $\LM$. A homomorphism between them is
$\phi=(\phi_0,\phi_1)$ where $\phi_0:\g\to\g'$ is an $n$-Lie algebra homomorphism and $\phi_1:M\to M'$ is an equivariant map such that
\begin{eqnarray}
   f'\circ \phi_1&=&\phi_0\circ f,\\
   \phi_0([x_1, \dots, x_n])&=&[\phi_0(x_1), \dots, \phi_0(x_n)]',\\
 \phi_1(\rho(x_1,\dots, x_{n-1})(m))&=&\rho'(\phi_0(x_1), \dots, \phi_0(x_{n-1}))(\phi_1(m)).
\end{eqnarray}

\begin{pro}\label{prop1}
Let $\g$ be an $n$-Lie algebra and $(M,\rho)$ be a representation.
Then  $ f: M\rightarrow\g$ is an $n$-Lie algebra in $\LM$
if and only if the map $\id+f : \g\rtimes M \to \g\rtimes\g$ is a homomorphism of $n$-Lie algebras.
\end{pro}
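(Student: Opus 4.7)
The plan is a direct verification: unpack both sides of the homomorphism equation using the explicit semidirect-product brackets from the earlier Proposition, and read off the defining condition of $f$. First, I would recall that in $\g\rtimes M$ the bracket is
\[
[x_1+u_1,\dots,x_n+u_n]=[x_1,\dots,x_n]_\g+\sum_{i=1}^n(-1)^{n-i}\rho(x_1,\dots,\widehat{x_i},\dots,x_n)(u_i),
\]
with the analogous formula for $\g\rtimes\g$ obtained by replacing $\rho$ with $\ad$, while the candidate homomorphism acts by $(\id+f)(x+u)=x+f(u)$. Since $\id+f$ is linear and the $n$-ary operation is multilinear, checking the homomorphism property reduces to preservation of the $n$-bracket.

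Next, I would compute the two sides of the bracket-preservation equation and compare their projections onto the two $\g$-summands of the target $\g\rtimes\g$. The projections to the first summand are $[x_1,\dots,x_n]_\g$ on both sides and match automatically. The projections to the second summand become
\[
\sum_{i=1}^n(-1)^{n-i}f\bigl(\rho(x_1,\dots,\widehat{x_i},\dots,x_n)(u_i)\bigr) \quad \text{versus} \quad \sum_{i=1}^n(-1)^{n-i}[x_1,\dots,\widehat{x_i},\dots,x_n,f(u_i)]_\g.
\]
Specializing to a single nonzero $u_i$ at a time, equality of these sums for all inputs reduces to the single identity $f(\rho(x_1,\dots,x_{n-1})(u))=[x_1,\dots,x_{n-1},f(u)]_\g$, which is exactly the defining condition \eqref{eq:01} of an $n$-Lie algebra in $\LM$.

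With both directions of the equivalence then falling out at once, I do not expect a substantial obstacle. The only bookkeeping point is to keep the signs $(-1)^{n-i}$ consistent and to note that the skew-symmetry of $\rho$ (as a map on $\wedge^{n-1}\g$) together with the total skew-symmetry of the $n$-bracket on $\g$ makes the identity for the last slot equivalent to the apparently stronger identities for every $i$, so \eqref{eq:01} is indeed both necessary and sufficient.
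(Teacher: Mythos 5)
Your proposal is correct and follows essentially the same route as the paper's proof: expand both sides of the bracket-preservation equation using the semidirect-product formulas, observe that the $\g$-components agree automatically, and identify the remaining condition with the defining equation \eqref{eq:01} (up to the skew-symmetry that lets one reduce the identity for each slot $i$ to the single identity for the last slot). Your explicit remark about that skew-symmetry reduction is a small clarification the paper leaves implicit, but the argument is otherwise identical.
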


\begin{proof}
We will verify when the map $\id+f: \g\rtimes M \to \g\rtimes\g$ is a homomorphism of $n$-Lie algebras:
\begin{eqnarray}\label{prop-hom-01}
&&(\id+f)([x_1+ m_1, \dots, x_n+ m_n])={[(\id+f)(x_1+ m_1),\dots, (\id+f)(x_n+ m_n)]}.
\end{eqnarray}
On one hand,
\begin{eqnarray*}
&&(\id+f) [x_1+ m_1, \dots, x_n+ m_n]\\
&=&(\id+f)\Big([x_1,\dots,x_{n}]+ \sumn(-1)^{n-i} \rho (x_1,\dots, \widehat{x_i},\dots, x_n)(m_i)\Big) \\
&=&[x_1,\dots,x_{n}]+ \sumn(-1)^{n-i} f(\rho (x_1,\dots, \widehat{x_i},\dots, x_n)(m_i).
\end{eqnarray*}
On the other hand,
\begin{eqnarray*}
&&{[(\id+f)(x_1+ m_1),\dots, (\id+f)(x_n+ m_n)]}\\
&=&{[x_1+ f(m_1),\dots,x_n+ f(m_n)]}\\
&=&[x_1,\dots,x_{n}]+ \sumn(-1)^{n-i} [x_1,\dots, \widehat{x_i},\dots, x_n, f(m_i)].
\end{eqnarray*}
Thus, the two sides of above equation \eqref{prop-hom-01} are equal to each other if and only if
$$f(\rho(x_1,\dots,\widehat{x_i},\dots,x_{n})(m_i))=[x_1,\dots,\widehat{x_i},\dots,x_{n}, f(m_i)]$$
holds, which is the condition \eqref{eq:01}.
Therefore  $(\id+f)$ is a homomorphism of $n$-Lie algebras if and only if  the conditions \eqref{eq:01} hold.
This completes the proof.
\end{proof}

\begin{ex}
Consider the map $\pi:\g\ltimes_H V\to \g$ where $\pi$ is the natural projection, and the action of $\g$ on $\g\ltimes_H V$ given by
$$\rho(x_1,\dots, x_{n-1})(x+v)\triangleq[x_1,\dots, x_{n-1},x]+ \rho(x_1,\dots, x_{n-1})(v).$$
Then by Theorem \ref{thm:main01}, $\g\ltimes_H V$  becomes a Leibniz $n$-algebra under the $n$-bracket
\begin{eqnarray*}
&&[x_1+ u_1, \dots, x_n+u_n]_\pi\\
&=&\rho (\pi(x_1+ u_1), \dots, \pi(x_{n-1}+ u_{n-1}))(x_{n}+u_n)\\
&=&[x_1,\dots,x_{n}]+ \rho (x_1,\dots, x_{n-1})(u_n),
\end{eqnarray*}
which is exactly the $n$-bracket \eqref{hemisemi} in the hemisemidirect product $\g\ltimes_H V$.
\end{ex}

\begin{ex}
For an $n$-Lie algebra $\g$, we define $f=[\cdot,\dots,\cdot]:\wedge^n \g\to \g$ and the representation of $\g$  on $\wedge^n \g$ given by
$$\rho(x_1,\dots,x_{n-1})(y_1,\dots, y_n)\triangleq\sumn \left(y_1,\dots,[x_1,\dots,x_{n-1},y_i],\dots, y_n\right)$$
for all $x_i\in \g, (y_1,\dots, y_n)\in \wedge^n \g$.
In this case, by Theorem \ref{thm:main01}, $\wedge^n \g$  becomes a Leibniz $n$-algebra under the bracket
\begin{eqnarray*}
{[Y_1, \dots, Y_n]}&=&\rho(f(Y_1),\dots,f(Y_{n-1}))(Y_n)\\
&=&\sumn \left(y_1,\dots,[[y_1^1,\dots,y_{n}^1],[y_1^{n-1},\dots,y_{n}^{n-1}],y_i],\dots, y_n\right)
\end{eqnarray*}
 for all $Y_i=(y_1^i,\dots,y_n^i)\in \wedge^n \g$.
\end{ex}

\subsection{Representations}

\begin{defi}
Let $(M,\g, f)$ be an $n$-Lie algebra in $\LM$.
A representation  $\rho=(\rho_{1},\rho_{2},\rho_{3})$ of $(M,\g,f)$ is an object $(V,W,\varphi)$ in $\LM$ such that
the following conditions are satisfied:

(i) $(V,\rho_{1})$ and $(W,\rho_{2})$ are representations of $n$-Lie algebra $\g$;

(ii) $\varphi$ is an equivariant map with respect to $\rho_{1}$ and $\rho_{2}$:
\begin{eqnarray}\label{defrep02}
&&\varphi\circ\rho_1(x_1, \dots,x_{n-1})(v)=\rho_2(x_1, \dots,x_{n-1})\circ\varphi(v);
\end{eqnarray}

(iii) there exists a map $\rho_3:  \underbrace{\g\otimes\dots\otimes \g}_{n-2}  \otimes M\longrightarrow \Hom(W,V)$ such that:
\begin{eqnarray}\label{defrep03}
\varphi\circ\rho_3(x_1,\dots,x_{n-2},m)(w)=\rho_2(x_1,\dots,x_{n-2},f(m))(w),
\end{eqnarray}
where $x_1, \dots,x_{n-1}\in \g$, $m\in M$, $v\in V$ and  $w\in W$;

(iv) the following compatibility conditions hold:
\begin{eqnarray}
\notag&&\rho_1(x_1,\dots, x_{n-1})\rho_{3}(y_1,\dots,y_{n-2},m)(w)\\
\notag&=&\rho_3\big(y_1,\dots,y_{n-2},\rho(x_1,\dots, x_{n-1})(m)\big)(w)\\
\notag&&+\rho_3(y_1,\dots,y_{n-2},m))\rho_{2}(x_1,\dots, x_{n-1})(w)\\
\label{defrep04}&&+\sum_{i=1}^{n-2}\rho_3(y_1,\dots,[x_1,\dots, x_{n-1},y_i],\dots,y_{n-2},m)(w),\\[1em]
 \notag&&\rho_{3}(x_1, \dots, x_{n-2}, \rho(y_1,\dots, y_{n-1})(m))(w)\\
\notag&=& \rho_1(y_1, \dots, y_{n-1})\rho_{3}(x_1,\dots, x_{n-2},m)(w)\\
\label{defrep05}&&+\sumnoi(-1)^{n-i}\rho_{3}(y_1,\dots,\widehat{y_i} \dots, y_{n-1},m)\rho_2 (x_1, \dots, x_{n-2}, y_i)(w),
\end{eqnarray}
\begin{eqnarray}
\notag&&\rho_{3}(x_1,\dots, x_{n-3}, [y_1,\dots,y_{n}],m)(w)\\
\label{defrep06}&=&\sumn(-1)^{n-i}\rho_1(y_1,\dots,\widehat{y_i}\dots, y_{n})\rho_3(x_1,\dots, x_{n-3}, y_i, m)(w),\\[1em]
 \notag&&\rho_{3}(x_1, \dots, x_{n-2}, m)\rho(y_1,\dots, y_{n-1})(w)\\
\notag&=& \rho_1(y_1, \dots, y_{n-1})\rho_{3}(x_1,\dots, x_{n-2},m)(w)\\
\label{defrep07}&&+\sumnoi(-1)^{n-i}\rho_{3}\left(y_1,\dots,\widehat{y_i} \dots, y_{n-1},\rho(x_1, \dots, x_{n-2}, y_i)(m)\right)(w).
\end{eqnarray}
\end{defi}

From  the map $\rho_3$,  we define the $n$-linear map
$\{\cdot,\dots,\cdot\}: \underbrace{\g\otimes\dots\otimes \g}_{n-2}\otimes M\otimes  W\to V$ by
\begin{equation}
\{x_1,\dots,x_{n-2},m,w\}\triangleq\rho_3(x_1,\dots,x_{n-2},m)(w).
\end{equation}
Then the above equation \eqref{defrep03} is equivalent to
\begin{eqnarray}
&&\varphi(\{x_1,\dots,x_{n-2},m,w\})=\rho_2(x_1,\dots,x_{n-2},f(m))(w).
\end{eqnarray}
The above equations \eqref{defrep02} and  \eqref{defrep03} imply that the following two diagrams commute:
\begin{equation}
\label{module square 1}
\begin{gathered}
 \xymatrixcolsep{3.5pc}
 \xymatrixrowsep{2.5pc}
    \xymatrix{
 \g \otimes  \dots\otimes\g \otimes V  \ar[r]^-{\rho_1}    \ar[d]_{1 \otimes  \dots\otimes 1 \otimes \varphi} & V  \ar[d]^{\varphi} \\
       \g\otimes   \dots\otimes\g \otimes W  \ar[r]^-{\rho_2}  &  W  }
\end{gathered}\quad \mbox{and}\quad
\begin{gathered}
 \xymatrixcolsep{3.5pc}
 \xymatrixrowsep{2.5pc}
    \xymatrix{
\g \otimes\dots\otimes M\otimes  W  \ar[r]^-{\{\cdot,\dots,\cdot\}}    \ar[d]_{1_\g\otimes\dots\otimes f \otimes 1_W} & V  \ar[d]^{\varphi} \\
       \g\otimes \dots\otimes\g\otimes W  \ar[r]^-{\rho_2}  &  W  }
\end{gathered}
\end{equation}

For $(V,W,\varphi)=(M,\g, f)$, we get the adjoint representations of $n$-Lie algebras $(M,\g, f)$ on itself. This can be seen as following:

(i) $M$ and $\g$ are representations of $\g$ given by $\rho_1(x_1,  \dots,x_{n-1})(y)=\ad(x_1, \dots,x_{n-1})(y)$ and $\rho_2(x_1, \dots,x_{n-1})(m)=\rho(x_1, \dots,x_{n-1})(m)$;

(ii) $\varphi=f$ is an equivariant map:
\begin{eqnarray*}
f([x_1, \dots,x_{n-1}, m])=[x_1, \dots,x_{n-1}, f(m)];
\end{eqnarray*}

(iii) there exists a bilinear map $\{\cdot,\dots,\cdot\}: \g \otimes\dots\otimes M\otimes  W\to V$ defined by $$\{x_1,\dots,x_{n-2},m,x_{n-1}\}\triangleq-\rho(x_1,  \dots,x_{n-1}, m)$$
 such that
\begin{eqnarray*}
f(\{x_1,\dots,x_{n-2},m,x_{n-1}\})&=&-f(\rho(x_1,  \dots,x_{n-1}, m))\\
&=&-[x_1, \dots,x_{n-1},  f(m)]\\
&=&-[x_1, \dots, f(m), x_{n-1}]\\
&=&\rho_2(x_1,\dots,x_{n-2},f(m))(x_{n-1}).
\end{eqnarray*}

(iv) the compatibility conditions \eqref{defrep04}--\eqref{defrep07}  are satisfied by direct computations.

\medskip

We construct semidirect products of an $n$-Lie algebra structure in $\LM$ using its representations.
\begin{pro}\label{pro:semidirectproduct}
  Given a representation of the $n$-Lie algebra $(M,\g, f)$ on $(V,W,\varphi)$. Define on $(M\oplus V,\frkg\oplus W, \widehat{f})$ an $n$-Lie algebra in $\LM$ by
\begin{equation}\label{semi11}
\left\{\begin{array}{ll}
&\widehat{f}(m+v)\triangleq f(m)+\varphi(v),\\
&{[x_1+w_1, \dots, x_n+w_n]}\triangleq[x_1, \dots, x_n]+\sumn(-1)^{n-i} \rho_2 (x_1,\dots, \widehat{x_i},\dots, x_n)(w_i),\\
&{\wrho(x_1+w_1,\dots, x_{n-1}+w_{n-1})(m+v)}\triangleq \rho(x_1, \dots, x_{n-1})(m)+\rho_{1}(x_1,\dots, x_{n-1})(v)\\
&\qquad\qquad\qquad\qquad\qquad\qquad\qquad\qquad+\sumnoi (-1)^{n-i}\rho_{3}(x_1,\dots, \widehat{x_i},\dots,x_{n-1},m)(w_i).
\end{array}\right.
\end{equation}
Then $(M\oplus V,\frkg\oplus W, \widehat{f})$ is an $n$-Lie algebra in $\LM$, which is called the semidirect product of  $(M,\g, f)$ and $(V,W,\varphi)$.
\end{pro}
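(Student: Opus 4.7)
The plan is to verify the three conditions for $(M\oplus V,\frkg\oplus W,\widehat{f})$ to be an $n$-Lie algebra in $\LM$: (a) $\frkg\oplus W$ is an $n$-Lie algebra under the prescribed bracket, (b) $(M\oplus V,\widehat{\rho})$ is a representation of $\frkg\oplus W$, and (c) $\widehat{f}$ is equivariant and satisfies the compatibility relation \eqref{eq:01}.

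For (a), I would note that the bracket on $\frkg\oplus W$ is nothing but the semidirect product bracket $\frkg\ltimes W$ of $\frkg$ with its representation $(W,\rho_2)$. Thus the fact that it defines an $n$-Lie algebra follows immediately from the proposition on semidirect products of $n$-Lie algebras recalled in Section~\ref{sec:cohom}, without any further computation.

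For (b), I would expand $\widehat{\rho}(x_1+w_1,\ldots,x_{n-1}+w_{n-1})(m+v)$ and check conditions (R1) and (R2). Each side decomposes into four types of terms according to which of $\rho$, $\rho_1$, $\rho_2$, $\rho_3$ contributes. The terms involving only $\rho$ or $\rho_1$ are controlled by the fact that $(M,\rho)$ and $(V,\rho_1)$ are representations of $\frkg$. The mixed terms—where one tensor slot contributes a $w_i\in W$ and interacts with an $m$ through $\rho_3$—are exactly the situations regulated by the compatibility conditions \eqref{defrep04}--\eqref{defrep07}. Specifically, (R1) will be read off by matching the expansion against \eqref{defrep04} (for $\rho_1$ versus $\rho_3$) and \eqref{defrep07} (for $\rho_3$ versus $\rho_2$), while (R2) reduces to \eqref{defrep05} and \eqref{defrep06} after one uses the bracket in $\frkg\oplus W$ to expand $[y_1+w_1,\ldots,y_n+w_n]$. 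This bookkeeping is where I expect the main difficulty: many terms need to be sorted by which slot $w_i$ appears in and with which of the $\rho_i$ it is paired, and signs of the form $(-1)^{n-i}$ must be tracked carefully.

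For (c), the equivariance and embedding condition for $\widehat{f}$ splits into pieces: $\widehat{f}$ applied to the $\rho(x_1,\ldots,x_{n-1})(m)$ and $\rho_1(x_1,\ldots,x_{n-1})(v)$ components of $\widehat{\rho}(X_1,\ldots,X_{n-1})(m+v)$ lands in $\frkg$ and $W$ respectively; $f$-equivariance of $\rho$ (i.e.\ the assumption that $(M,\frkg,f)$ is an $n$-Lie algebra in $\LM$) handles the first, while \eqref{defrep02} handles the second. The remaining terms $\rho_3(x_1,\dots,\widehat{x_i},\dots,x_{n-1},m)(w_i)$ are sent by $\widehat{f}=\varphi$ to $\rho_2(x_1,\dots,\widehat{x_i},\dots,x_{n-1},f(m))(w_i)$ by \eqref{defrep03}, and these match precisely the cross terms obtained by expanding $[x_1+w_1,\ldots,x_{n-1}+w_{n-1},f(m)+\varphi(v)]$ in $\frkg\oplus W$. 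Thus \eqref{eq:01} holds for $\widehat{f}$, completing the verification.

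The main obstacle is therefore not any conceptual subtlety but the combinatorial bookkeeping in step (b), where one must cleanly isolate, in each term of (R1) and (R2), which of the four compatibility conditions \eqref{defrep04}--\eqref{defrep07} absorbs it; once the labelling convention for the cross terms is fixed, each condition is used in exactly one place and the verification reduces to assembling the pieces.
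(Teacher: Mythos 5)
Your plan is correct and follows essentially the same route as the paper's proof: verify equivariance of $\widehat{f}$ via \eqref{defrep03} (together with \eqref{eq:01} for $f$ and \eqref{defrep02}), then check (R1) and (R2) for $\widehat{\rho}$ by expanding and matching cross terms against the compatibility conditions \eqref{defrep04}--\eqref{defrep07}; your step (a) makes explicit a point the paper leaves implicit, namely that the bracket on $\frkg\oplus W$ is just the semidirect product $\frkg\ltimes W$ and hence an $n$-Lie algebra by the earlier proposition. One small correction to your bookkeeping: when the computation is carried out, (R1) is absorbed by \eqref{defrep04} \emph{and} \eqref{defrep05} (the latter handles the terms $\rho_3(x_1,\dots,\widehat{x_j},\dots,x_{n-1},\rho(y_1,\dots,y_{n-1})(m))(w_j)$ arising from composing $\widehat\rho(X)$ with the $M$-component of $\widehat\rho(Y)(m+v)$), while (R2) uses \eqref{defrep06} \emph{and} \eqref{defrep07} (the latter handles $\rho_3(x_1,\dots,x_{n-2},m)$ applied to the $W$-component of $[y_1+w'_1,\dots,y_n+w'_n]$), so your assignment of \eqref{defrep05} and \eqref{defrep07} is swapped.
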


\begin{proof}
First, we verify that $\widehat{f}$ is an equivariant map:
\begin{eqnarray}\label{semi01}
\widehat{f}{\big({\wrho(x_1+w_1,\dots, x_{n-1}+w_{n-1})(m+v)}\big)}={[x_1+w_1, \dots, x_{n-1}+w_{n-1},\widehat{f}(m+v)]}.
\end{eqnarray}
The left-hand side of \eqref{semi01} is
\begin{eqnarray*}
&&\widehat{f}{\big({\wrho(x_1+w_1,\dots, x_{n-1}+w_{n-1})(m+v)}\big)}\\
&=&\widehat{f}\rho(x_1, \dots, x_{n-1})(m)+\widehat{f}\rho_{1}(x_1,\dots, x_{n-1})(v)\\
&&-\widehat{f}\rho_{3}(w_1,\dots, w_{n-1})(m)\\
&=&f(\rho(x_1, \dots, x_{n-1})(m))+\varphi\circ\rho_{1}(x_1, \dots, x_{n-1})(v)\\
&&+\sumnoi (-1)^{n-i}\varphi\circ\rho_{3}(x_1,\dots, \widehat{x_i},\dots,x_{n-1},m)(w_i),
\end{eqnarray*}
and the right-hand side of \eqref{semi01} is
\begin{eqnarray*}
&&{[[x_1+w_1, \dots, x_{n-1}+w_{n-1},\widehat{f}(m+v)]}\\
&=&[[x_1+w_1, \dots, x_{n-1}+w_{n-1}, f(m)+\varphi(v)]\\
&=&[x_1, \dots, x_{n-1},\dM(m)]+\rho_{2}(x_1, \dots, x_{n-1})(\varphi(v))\\
&&+\sumnoi(-1)^{n-i} \rho_2 (x_1,\dots, \widehat{x_i},\dots, f(m))(w_i).
\end{eqnarray*}
Thus, the two sides of  \eqref{semi01} are equal to each other by \eqref{defrep03}.

Next, we  prove that $\widehat{\rho}$ is a representation of $\g\oplus W$ on $M\oplus V$ satisfying (R1):
\begin{eqnarray}\label{semi02}
\notag&&\wrho(x_1+w_1,\dots, x_{n-1}+w_{n-1})\wrho(y_1+w'_1,\dots, y_{n-1}+w'_{n-1})(m+v)\\
&&-\wrho(y_1+w'_1,\dots,y_{n-1}+w'_{n-1})\wrho(x_1+w_1,\dots, x_{n-1}+w_{n-1})(m+v)\\
\notag&=&\sumnoi \wrho(y_1+w'_1,\dots, [x_1+w_1,\dots,x_{n-1}+w_{n-1},y_i+w'_i],\dots,y_{n-1}+w'_{n-1}))(m+v)
\end{eqnarray}

The left-hand side of \eqref{semi02} is
\begin{eqnarray*}
&&\wrho(x_1+w_1,\dots, x_{n-1}+w_{n-1})\wrho(y_1+w'_1,\dots, y_{n-1}+w'_{n-1})(m+v)\\
&&-\wrho(y_1+w'_1,\dots,y_{n-1}+w'_{n-1})\wrho(x_1+w_1,\dots, x_{n-1}+w_{n-1})(m+v)\\
&=&\wrho(x_1+w_1,\dots, x_{n-1}+w_{n-1})\Big( \rho(y_1, \dots, y_{n-1})(m)+\rho_{1}(y_1,\dots, y_{n-1})(v)\\
&&+\sumnoi (-1)^{n-i}\rho_{3}(y_1,\dots, \widehat{y_i},\dots,y_{n-1},m)(w'_i)\Big)-(x\leftrightarrow y, w_i\leftrightarrow w'_i)\\
&=&[\rho(x_1, \dots, x_{n-1}),\rho(y_1, \dots, y_{n-1})](m)+[\rho_1(x_1, \dots, x_{n-1}),\rho_{1}(y_1,\dots, y_{n-1})](v)\\
&&+\sumnoi (-1)^{n-i}\rho_1(x_1, \dots, x_{n-1})\rho_{3}(y_1,\dots, \widehat{y_i},\dots,y_{n-1},m)(w'_i)\\
&&+\sumnoi (-1)^{n-i}\rho_1(y_1, \dots, y_{n-1})\rho_{3}(x_1,\dots, \widehat{x_i},\dots,x_{n-1},m)(w_i)\\
&&+\sumnoj (-1)^{n-j}\sumnoi (-1)^{n-i}\rho_{3}(x_1,\dots, \widehat{x_j},\dots,x_{n-1},\rho(y_1, \dots, y_{n-1})(m))(w_j)\\
&&+\sumnoj (-1)^{n-j}\sumnoi (-1)^{n-i}\rho_{3}(y_1,\dots, \widehat{y_j},\dots,y_{n-1},\rho(x_1, \dots, x_{n-1})(m))(w'_j),
\end{eqnarray*}
and the right-hand side of \eqref{semi02} is
\begin{eqnarray*}
&&\sumnoi \wrho(y_1+w'_1,\dots, [x_1+w_1,\dots,x_{n-1}+w_{n-1},y_i+w'_i],\dots,y_{n-1}+w'_{n-1}))(m+v)\\
&=&\sumnoi \wrho\Big(y_1+w'_1,\dots, [x_1,\dots,x_{n-1},y_i]+\rho_2(x_1,\dots,x_{n-1})(w'_i)\\
&&+\sum_{j=1}^{n-1}(-1)^{n-j}\rho_2 (x_1,\dots, \widehat{x_j},\dots, y_i)(w_j),\dots,y_{n-1}+w'_{n-1})\Big)(m+v)\\
&=&\sumnoi\rho(y_1, \dots, [x_1,\dots,x_{n-1},y_i],\dots, y_{n-1})(m)\\
&&+\sumnoi\rho_{1}(y_1,\dots, [x_1,\dots,x_{n-1},y_i],\dots, y_{n-1})(v)\\
&&+\sumnoi\sum_{j=1}^{n-1} (-1)^{n-i}\rho_{3}(y_1,\dots, [x_1,\dots,x_{n-1},y_i],\dots, \widehat{y_j},\dots,y_{n-1},m)(w'_j)\\
&&+\sumnoi\sum_{j=1}^{n-1}(-1)^{n-j}\rho_{3}(y_1,\dots, \widehat{y_i},\dots,y_{n-1},m)\\
&&\quad\Big(\rho_2(x_1,\dots,x_{n-1})(w'_i)+\sum_{j=1}^{n-1,i}(-1)^{n-j}\rho_2 (x_1,\dots, \widehat{x_j},\dots, y_i)(w_j)\Big).
\end{eqnarray*}
Thus, the two sides of  \eqref{semi02} are equal to each other by $\rho, \rho_1 $ are representation of $\g$ on $M, V$ and \eqref{defrep04}, \eqref{defrep05} hold.

Finally, we  prove that $\widehat{\rho}$ is a representation of $\g\oplus W$ on $M\oplus V$ satisfying (R2):
\begin{eqnarray}\label{semi03}
\notag&&\wrho(x_1+w_1,\dots, x_{n-2}+w_{n-2}, [y_1+w'_1,\dots, y_{n}+w'_{n}])(m+v)\\
\notag&=&\sumn (-1)^{n-i}\wrho(y_1+w'_1,\dots, \widehat{y_i+w'_i},\dots,y_{n}+w'_{n})\\
&&\wrho(x_1+w_1,\dots, x_{n-2}+w_{n-2}, y_i+w'_i)(m+v).
\end{eqnarray}
The left-hand side of \eqref{semi03} is
\begin{eqnarray*}
\notag&&\wrho(x_1+w_1,\dots, x_{n-2}+w_{n-2}, [y_1+w'_1,\dots, y_{n}+w'_{n}])(m+v)\\
\notag&=&\wrho(x_1+w_1,\dots, x_{n-2}+w_{n-2},[y_1,\dots,y_{n}]\\
&&+ \sumn(-1)^{n-i} \rho (y_1,\dots, \widehat{y_i},\dots, y_n)(w'_i))(m+v)\\
\notag&=&\rho(x_1,\dots, x_{n-2},[y_1,\dots,y_{n}])(m)+\rho(x_1,\dots, x_{n-2},[y_1,\dots,y_{n}])(v)\\
\notag&&+\sumnoii (-1)^{n-i}\rho_{3}(x_1,\dots, \widehat{x_i},\dots,x_{n-2}, [y_1,\dots,y_{n}],m)(w_i)\\
\notag&&+\rho_{3}(x_1,\dots, x_{n-2},m)\left(\sumn(-1)^{n-i} \rho (y_1,\dots, \widehat{y_i},\dots, y_n)(w'_i)\right)
\end{eqnarray*}
and the right-hand side of \eqref{semi03} is
\begin{eqnarray*}
\notag&&\sumn (-1)^{n-i}\wrho(y_1+w'_1,\dots, \widehat{y_i+w'_i},\dots,y_{n}+w'_{n})\\
\notag&&\wrho(x_1+w_1,\dots, x_{n-2}+w_{n-2}, y_i+w'_i)(m+v)\\
\notag&=&\sumn (-1)^{n-i}\wrho(y_1+w'_1,\dots, \widehat{y_i+w'_i},\dots,y_{n}+w'_{n})\\
\notag&&\Big(\rho_1(x_1,\dots,x_{n-2},y_i)(m)+\rho_1(x_1,\dots,x_{n-2},y_i)(v)\\
\notag&&+\sumnojj(-1)^{n-j}\rho_3(x_1,\dots,\widehat{x_i}\dots, x_{n-2}, y_i, m)(w_j)+\rho_3(x_1,\dots, x_{n-2}, m)(w'_i)\Big)\\
\notag&=&\sumn(-1)^{n-i}\rho_1(y_1,\dots,\widehat{y_i}\dots, y_{n})\rho_1(x_1,\dots,x_{n-2},y_i)(m)\\
\notag&&+\sumn(-1)^{n-i}\rho_1(y_1,\dots,\widehat{y_i}\dots, y_{n})\rho_1(x_1,\dots,x_{n-2},y_i)(v)\\
\notag&&+\sumn(-1)^{n-i}\rho_1(y_1,\dots,\widehat{y_i}\dots, y_{n})\sumnojj(-1)^{n-j}\rho_3(x_1,\dots,\widehat{x_j}\dots, x_{n-2}, y_i, m)(w_j)\\
\notag&&+\sumn(-1)^{n-i}\rho_1(y_1,\dots,\widehat{y_i}\dots, y_{n})\rho_3(x_1,\dots, x_{n-2}, m)(w'_i)\\
\notag&&+\sumn(-1)^{n-i}\sumnojj(-1)^{n-j}\rho_3(y_1,\dots,\widehat{y_j},\dots,\widehat{y_i}\dots, y_{n}, \rho_1(x_1,\dots,x_{n-2},y_i)(m))(w'_j).
\end{eqnarray*}
Thus, the two sides of  \eqref{semi03} are equal to each other by $\rho, \rho_1 $ are representation of $\g$ on $M, V$ and \eqref{defrep06}, \eqref{defrep07} hold.
This completes the proof.
\end{proof}

\subsection{Cohomology}

Let $(M,\g, f)$ be  an $n$-Lie algebra in $\LM$ and $(V,W,\varphi)$ be a representation of  $(M,\g, f)$.
We define the $k$-cochain  $C^{k}((M,\g,f),(V,W,\varphi))$ to be the space
\begin{eqnarray*}
&&\Hom(\wedge^{k(n-1)+1} \g,W)\oplus\Hom(\wedge^{k(n-1)}  \g,\Hom(M,V ))\oplus \Hom(\wedge^{(k-1)(n-1)} \g,\Hom(M,W))
\end{eqnarray*}
which can be seen as the subspace of
\begin{eqnarray*}
&&\Hom(\otimes^{k}\L, \Hom(\g,W))\oplus\Hom(\otimes^{k}\L,\Hom(M,V ))\oplus \Hom(\otimes^{k-1}\L,\Hom(M,W)),
\end{eqnarray*}
and the coboundary map is given by
\begin{eqnarray}
D(\omega,\nu,\theta)=(-\delta_1\omega, -h_1\omega+\delta_2 \nu, -h_2\omega+\varphi_\sharp\nu-\delta_3\theta)
\end{eqnarray}
where $\delta_1,\delta_2,\delta_3$ are  coboundary maps in $n$-Lie algebra cohomology of $\g$ with coefficient in $W$, $\Hom(M,V)$, $\Hom(M,W)$ respectively.
The following maps:
\begin{eqnarray}
&&\varphi_\sharp: \Hom(\wedge^{k-1} \L,\Hom(M,V ))\to \Hom(\wedge^{k-1} \L,\Hom(M,W)),\\
&&h_1: \Hom(\wedge^k \L\otimes \g,W)\to \Hom(\wedge^{k+1} \L,\Hom(M,V)),\\
&&h_2: \Hom(\wedge^{k} \L\otimes \g,W)\to \Hom(\wedge^{k} \L,\Hom(M,W))
\end{eqnarray}
are defined by
\begin{eqnarray}
&&(\varphi_\sharp(\nu)(X_1, \dots, X_{k-1}))(m)=\varphi(\nu(X_1, \dots, X_{k-1})(m)),\\
&&(h_1(\omega)(X_1, \dots, X_{k}))(m)=\rho_3(m)\omega( X_1, \dots, X_{k}),\\
&&(h_2(\omega)(X_1, \dots, X_{k-1}))(m)=\omega(f(m), X_1, \dots, X_{k-1}).
\end{eqnarray}

It can be verified by direct computations that $D^2=0$, thus we obtain a cochain complex $C^k((M,\g,f),(V,W,\varphi))$ whose cohomology group
$$H^k((M,\g,f),(V,W,\varphi))=Z^k((M,\g,f),(V,W,\varphi))/B^k((M,\g,f),(V,W,\varphi))$$
is defined as cohomology group of $(M,\g,f)$ with coefficients in $(V,W,\varphi)$.

The cochain complex is precisely given  by
\begin{equation} \label{eq:complex}
\begin{split}
 & \qquad \qquad\  W\stackrel{D_0}{\longrightarrow}\\
 &  \quad\Hom(\g,W)\oplus\Hom(M,V)\stackrel{D_1}{\longrightarrow}\\
 & \Hom(\wedge^n \g, W)\oplus \Hom(\wedge^{n-1} \g,\Hom(M,V))\oplus \Hom(M,W)\stackrel{D_2}{\longrightarrow}\\
  & \Hom(\wedge^{2n-1} \g, W)\oplus \Hom(\wedge^{2n-2}\g,\Hom( M,V))\oplus \Hom(\wedge^{n-1}\g,\Hom(M,W))\stackrel{D_3}{\longrightarrow}\dots,
\end{split}
\end{equation}
where one writes the cochain complex in components as follows:
$$
\xymatrix{
  W \ar[d]_{-\delta_1} \ar[dr]_{ }& & & & \\
  \Hom(\g,W)\oplus \ar[d]_{-\delta_1}\ar[dr]_{-h_1} \ar[drr]^ {-h_2} &\hspace{-1cm} \Hom(M,V)  \ar[d]_{\delta_2}\ar[dr]^{\varphi_\sharp}&&& \\
\Hom(\wedge^n \g, W)\oplus\ar[d]_{-\delta_1}\ar[dr]_{-h_1} \ar[drr]^ {-h_2} & \Hom(\wedge^{n-1}\g,\Hom(M,V))\oplus
            \ar[d]_{\delta_2}  \ar[dr]^{\varphi_\sharp}    & \hspace{-.5cm}\Hom(M,W)\ar[d]_{-\delta_3}  &&\\
\Hom(\wedge^{2n-1} \g, W) \oplus\ar[d]_{-\delta_1}\ar[dr]_{-h_1}\ar[drr]^ {-h_2} & \Hom(\wedge^{2n-2}\g,\Hom( M,V)) \oplus
            \ar[d]_{\delta_2}  \ar[dr]^{\varphi_\sharp}    & \hspace{-.5cm} \Hom(\wedge^{n-1}\g,\Hom(M,W))\ar[d]_{-\delta_3} & &  \\
\Hom(\wedge^{3n-2} \g, W) \oplus    & \Hom(\wedge^{3n-3}\g,\Hom( M,V)) \oplus     &\Hom(\wedge^{2n-2}\g,\Hom(M,W)) & &  \\
        }
$$

$\vdots$

For 1-cochain $(N_0,N_1)\in \Hom(\g,W)\oplus\Hom(M,V)$, the coboundary map is
\begin{eqnarray*}
D_1(N_0,N_1)(m)&=&(-h_2N_0+\varphi_\sharp  N_1)(m)=\varphi\circ N_1(m)-N_0\circ f(m),\\[1em]
D_1(N_0,N_1)(x_1, \dots, x_n)&=&-\delta_1N_0(x_1, \dots, x_n)\notag\\
&=&[N_0(x_1), \dots, x_n] +\dots+[x_1, \dots, N_0x_n] - N_0[x_1, \dots, x_n],\\[1em]
D_1(N_0,N_1)(x_1, \dots, x_{n-1}, m)&=&(-h_1N_0+\delta_2 N_1)(x_1, x_2, m)\notag\\
&=&[N_0(x_1), \dots, x_{n-1}, m] +\dots + [x_1, \dots, N_0x_{n-1},  m] \\
&&+ [x_1, \dots, x_{n-1},  N_1m]- N_1[x_1, \dots, x_{n-1}, m].
\end{eqnarray*}
Thus, a 1-cocycle is   $(N_0,N_1)\in \Hom(\g,W)\oplus\Hom(M,V)$, such that
\begin{eqnarray}
&&\varphi\circ N_1=N_0\circ f,\\
&&N_0[x_1, \dots, x_n]=\sumn [x_1, \dots, N_0x_i, \dots, x_n],\\
\notag&&N_1[x_1, \dots, x_{n-1}, m]=\sumnoi [x_1, \dots, N_0x_i, \dots, x_{n-1}, m]+[x_1, \dots, x_{n-1}, N_1m].
\end{eqnarray}

For a 2-cochain $(\omega,\nu,\theta)\in \Hom(\wedge^n \g, W)\oplus \Hom(\g,\Hom(M,V))\oplus \Hom(M,W)$, we get that
$$D_2(\omega,\nu,\theta)\in  \Hom(\wedge^{2n-1} \g, W)\oplus \Hom(\wedge^{2n-2}\g,\Hom( M,V))\oplus \Hom(\wedge^{n-1}\g,\Hom(M,W))$$
where
\begin{eqnarray*}
&&D_2(\omega,\nu,\theta)(x_1, \dots, x_{n-1}, y_1, \dots, y_n)\\
&=&-\delta_1\omega(x_1, \dots, x_{n-1}, y_1, \dots, y_n),
\end{eqnarray*}
\begin{eqnarray*}
&&D_2(\omega,\nu,\theta)(x_1, \dots, x_{n-1}, y_1, \dots, y_{n-1}, m)\\
&=&(-h_1\omega+\delta_2 \nu)(x_1, \dots, x_{n-1}, y_1, \dots, y_{n-1}, m),
\end{eqnarray*}
\begin{eqnarray*}
&&D_2(\omega,\nu,\theta)(x_1, \dots, x_{n-1}, m)\\
&=&(-h_2\omega+\varphi_\sharp\nu-\delta_3\theta)(x_1,\dots, x_{n-1}, m).
\end{eqnarray*}
Thus, a 2-cocycle is  $(\omega,\nu,\theta)\in \Hom(\wedge^n \g, W)\oplus \Hom(\wedge^{n-1} \g,\Hom(M,V))\oplus \Hom(M,W)$ such that the following conditions holds:
\begin{eqnarray}
\nonumber &&\theta (\rho(x_1,\dots, x_{n-1})(m))+\varphi (\nu (x_1, \dots, x_{n-1}, m))\\
&=&\omega (x_1, \dots, x_{n-1},  f(m))+[x_1, \dots, x_{n-1}, \theta (m)],
\label{eq:2-coc03}
\end{eqnarray}
\begin{eqnarray}
\nonumber &&\omega(x_1, \dots, x_{n-1}, [y_1, \dots, y_n])+\rho_2(x_1, \dots, x_{n-1})\omega(y_1, \dots, y_n)\\
\nonumber &=&\sumn \omega(y_1,\dots,[ x_1, \dots, x_{n-1}, y_i],\dots, y_n)\\
\label{eq:2-coc01} &&+\sumn(-1)^{n-i}\rho_2(y_1,\dots,\widehat{y_i},\dots y_n)\omega(x_1, \dots, x_{n-1}, y_i),
\end{eqnarray}
\begin{eqnarray}
\nonumber &&\nu(x_1, \dots, x_{n-1}, [y_1, \dots, y_{n-1},m])+\rho_1(x_1, \dots, x_{n-1})\nu(y_1, \dots, y_{n-1}, m)\\
\nonumber &=&\sumnoi\nu(y_1,\dots, [ x_1, \dots, x_{n-1}, y_i], \dots, y_{n-1}, m) +\nu(y_1, \dots, y_{n-1}, [ x_1, \dots, x_{n-1}, m])\\
\nonumber &&+\sumnoi [y_1,\nu(x_1, \dots, x_{n-1},y_i), \dots, y_{n-1}, m]+\rho_1(y_1,\dots, y_{n-1})\nu(x_1,  \dots, x_{n-1},  m).\\
\label{eq:2-coc02}
\end{eqnarray}

\section{Abelian extensions}

\begin{defi}
Let $(M,\g, f)$ be an $n$-Lie algebra in $\LM$. An extension of $(M,\g, f)$ is a
short exact sequence
\begin{equation}\label{eq:ext1}
\CD
  0 @>0>>  \frkh @>i_1>> \widehat{M} @>p_1>>  M  @>0>> 0 \\
  @V 0 VV @V \varphi VV @V \widehat{\dM} VV @V\dM VV @V0VV  \\
  0 @>0>> W @>i_0>> \widehat{\g} @>p_0>> \g @>0>>0
\endCD
\end{equation}
where $(V,W,\varphi)$ is an $n$-Lie algebra in $\LM$.

We call $(\widehat M,\widehat\g, \widehat f)$ an extension of $(M,\g, f)$ by
$(V,W,\varphi)$, and denote it by $\widehat{\E}$.
It is called an abelian extension if $(V,W,\varphi)$ is an abelian $n$-Lie algebra in $\LM$, which means that $\varphi: V\to W$ is a linear map between vector spaces $V$ and $W$ with zero bracket and trivial representation of $V$ on $W$.
\end{defi}

A splitting $\sigma=(\sigma_0,\sigma_1):(M,\g, f)\to (\widehat M,\widehat \g, \widehat f)$ consists of linear maps
$\sigma_0:\mathfrak{g}\to\widehat{\g}$ and $\sigma_1:M\to \widehat M$
 such that  $p_0\circ\sigma_0=\id_{\mathfrak{g}}$ and  $p_1\circ\sigma_1=\id_{M}$.

 Two extensions
 $$\widehat{\E}:0\longrightarrow(V,W,\varphi)\stackrel{i}{\longrightarrow}(\widehat M,\widehat \g, \widehat f)\stackrel{p}{\longrightarrow}(M,\g, f)\longrightarrow0$$
 and
 $$\widetilde{\E}:0\longrightarrow(V,W,\varphi)\stackrel{j}{\longrightarrow}(\widetilde{M},\widetilde{\g}, \widetilde{f})\stackrel{q}{\longrightarrow}(M,\g, f)\longrightarrow0$$
 are equivalent, if there exists a homomorphism $F:(\widehat M,\widehat \g, \widehat f)\longrightarrow(\widetilde{M},\widetilde{\g}, \widetilde{f})$ such that $F\circ i=j$, $q\circ
 F=p$.

Let $(\widehat M,\widehat\g, \widehat f)$ be an abelian extension of $(M,\g, f)$ by
$(V,W,\varphi)$ and $\sigma:(M,\g, f)\to (\widehat M,\widehat \g, \widehat f)$ be a splitting.
Define a  representation of $(M,\g, f)$ over $(V,W,\varphi)$ by
\begin{eqnarray*}
&&
\left\{\begin{array}{ll}
&\rho_{1}: \wedge^{n-1}\g\longrightarrow \gl(V),\\
& \rho_{1}(x_1, \dots, x_{n-1})(v)\triangleq\widehat\rho(\sigma_0(x_1), \dots, \sigma_0(x_{n-1})(v),
\end{array}\right.\\
&&
\left\{\begin{array}{ll}
&\rho_{2}: \wedge^{n-1}\g\longrightarrow \gl(W),\\
& \rho_{2}(x_1, \dots, x_{n-1})(z)\triangleq[\sigma_0(x_1), \dots, \sigma_0(x_{n-1}), w]_{\hg},
\end{array}\right.\\
&&
\left\{\begin{array}{ll}
&\rho_{3}:  \g\otimes\dots\otimes \g\otimes M\longrightarrow \Hom(W,V),\\
&\rho_3(x_1,\dots,x_{n-2},m)(w)\triangleq-\widehat\rho(\sigma_0(x_1), \dots, \sigma_0(x_{n-2}),  w)(\sigma_1(m)).
\end{array}\right.
\end{eqnarray*}
for all $x_1, \dots, x_{n-1},z\in\frkg$, $m\in M $.

\begin{pro}\label{pro:2-modules}
With the above notations, $(\rho_1,\rho_2,\rho_3)$ is a representation of $(M,\g,f)$ on $(V,W,\varphi)$.
Furthermore,  $(\rho_1,\rho_2,\rho_3)$ does not depend on the choice of the splitting $\sigma$.
Moreover,  equivalent abelian extensions give the same representation.
\end{pro}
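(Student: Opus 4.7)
The plan is to verify the three assertions in order, using that $(\widehat M, \widehat\g, \widehat f)$ is itself an $n$-Lie algebra in $\LM$ (so it satisfies every axiom of Definition~\ref{defi:Lie 2}) and that abelianness of $(V, W, \varphi)$ forces the vanishing of any $n$-bracket in $\widehat\g$, or any $\widehat\rho$-action on $\widehat M$, whose arguments contain two or more entries from the abelian ideal $W \subset \widehat\g$ (respectively from $V \subset \widehat M$).

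For the first assertion, I would check each representation axiom by lifting elements of $\g$ and $M$ along the splitting $\sigma$, expanding the defining formulas for $\rho_1, \rho_2, \rho_3$ in terms of the $\widehat\g$-bracket and $\widehat\rho$, and reducing with the corresponding axiom in $(\widehat M, \widehat\g, \widehat f)$. Concretely, (R1) and (R2) for $\rho_2$ follow from the fundamental identity in $\widehat\g$ projected to $W$; (R1) and (R2) for $\rho_1$ follow from the same axioms for $\widehat\rho$ acting on $V$; equivariance of $\varphi$ is immediate from equivariance of $\widehat f$; and \eqref{defrep03} is the equivariance of $\widehat f$ applied to a $W$-entry in $\widehat\g$. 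The four compatibility conditions \eqref{defrep04}--\eqref{defrep07} are each verified by the same strategy: expand both sides via lifts, match the principal terms with a known axiom of $(\widehat M, \widehat\g, \widehat f)$, and discard the remaining terms because each carries at least two entries in the abelian ideal.

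For the second assertion, if $\sigma'=(\sigma'_0,\sigma'_1)$ is another splitting, then $\tau_0 \triangleq \sigma'_0 - \sigma_0$ takes values in $\ker p_0 = W$ and $\tau_1 \triangleq \sigma'_1 - \sigma_1$ takes values in $\ker p_1 = V$. Substituting $\sigma_0(x_i)+\tau_0(x_i)$ and $\sigma_1(m)+\tau_1(m)$ into the defining formulas for $\rho_1, \rho_2, \rho_3$ and expanding by multilinearity produces correction terms which all involve at least one factor from $\tau_0$ or $\tau_1$ together with another factor already lying in the ideal; by abelianness every such correction vanishes. Hence $(\rho_1,\rho_2,\rho_3)$ depends only on the extension. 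For the third assertion, let $F=(F_0,F_1)$ realize the equivalence $\widehat{\E}\to\widetilde{\E}$. Given a splitting $\sigma$ of $\widehat{\E}$, the composite $F\circ\sigma$ is a splitting of $\widetilde{\E}$ because $q\circ F\circ\sigma = p\circ\sigma=\id$. Since $F$ is a homomorphism of $n$-Lie algebras in $\LM$ that restricts to the identity on $(V,W,\varphi)$, computing the representation of $\widetilde{\E}$ via $F\circ\sigma$ yields the same triple $(\rho_1,\rho_2,\rho_3)$ as the one obtained from $\sigma$; combined with splitting-independence, this proves the claim.

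The main obstacle is the bookkeeping in the first assertion: the compatibility conditions \eqref{defrep04}--\eqref{defrep07} intertwine the $n$-ary bracket and the $\widehat\rho$-action, and expanding lifted expressions generates a large number of cross terms. The key simplification each time is the abelian vanishing principle—any term with two or more ideal entries is zero—which collapses the computation down to the principal terms that are handled by the axioms already available for $(\widehat M, \widehat\g, \widehat f)$.
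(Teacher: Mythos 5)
Your proposal is correct and follows essentially the same route as the paper: verify the representation axioms by lifting through $\sigma$ and invoking the axioms of $(\widehat M,\widehat\g,\widehat f)$ together with the vanishing of any bracket or action with two or more entries in the abelian ideal, and establish splitting-independence by noting that the difference of two splittings lands in $\Ker p_0=W$ (resp.\ $\Ker p_1=V$). Your treatment of the third assertion (taking $F\circ\sigma$ as a splitting of $\widetilde{\E}$ and then citing splitting-independence) is a slightly tidier packaging of the paper's direct computation, but it is the same idea; the only small omission is the explicit check that $\rho_1,\rho_2,\rho_3$ actually take values in $V$ and $W$, which the paper records first and which follows at once from $p$ being a homomorphism.
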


\begin{proof}
Firstly, we show that $\rho_{1},\rho_{2},\rho_{3}$ are well defined.
Since $\Ker p_{0} \cong W$, then for $w\in {W}$, we have $p_{0}(w)=0$.
By the fact that $(p_1,p_0)$ is a homomorphism between $(\widehat M,\widehat\g, \widehat f)$ and $(M,\g,f)$, we get
\begin{eqnarray*}
p_{0}[\sigma_0(x_1),\dots,\sigma_0(x_{n-1}),w]_{\wg}&=&[p_{0}\sigma_0(x_1),\dots,p_{0}\sigma_0(x_{n-1}),p_{0}(w)]_{\wg}\\
&=&[p_{0}\sigma_0(x_1),\dots,p_{0}\sigma_0(x_{n-1}), 0]_{\wg}=0.
\end{eqnarray*}
Thus, $\rho_{2}(x_1, \dots, x_{n-1})(w)=[\sigma_0(x_1),\dots,\sigma_0(x_{n-1}),w]_{\wg}\in  \ker p_{0} \cong {W}$.
Similar computations show that $\rho_{1}(x_1, \dots, x_{n-1})(v)=\wrho(\sigma_0(x_1), \dots, \sigma_0(x_{n-1}))(v), \rho_{3}(x_1,\dots,x_{n-2},m)(w)=-\wrho(\sigma_0(x_1), \dots, \sigma_0(x_{n-2}),w)(\sigma_1(m))\in \Ker p_0=V$.

Now we  will show that $\rho_{i}$ are independent of the choice of $\sigma$.

In fact, if we choose another section $\sigma'_0:\g\to\hg$, then
\begin{eqnarray*}
&&p_0(\sigma_0(x_i)-\sigma'_0(x_i))=x_i-x_i=0\\
&\Longrightarrow&\sigma_0(x_i)-\sigma'_0(x_i)\in \Ker p_0=W\\
&\Longrightarrow&\sigma'_0(x_i)=\sigma'_0(x_i)+w_i
\end{eqnarray*}
for some $w_i\in W$.

Since we  have $[\dots,w,w']_{\hat{\g}}=0$ for all $w,w'\in W$, which implies that
\begin{eqnarray*}
&&[\sigma_0'(x_1),\dots,\sigma_0'(x_{n-1}),w]_{\hat{\g}}\\
&=&[\sigma_0(x_1)+u_1,\dots,\sigma_0(x_{n-1})+u_{n-1},w]_{\hat{\g}}\\
&=&[\sigma_0(x_1),\dots,\sigma_0(x_{n-1})+u_{n-1},w]_{\hat{\g}}+[u_1,\dots,\sigma_0(x_{n-1})+u_{n-1},w]_{\hat{\g}}\\
&=&[\sigma_0(x_1),\dots,\sigma_0(x_{n-1}),w]_{\hat{\g}}+\dots+[\sigma_0(x_1),\dots,u_{n-1},w]_{\hat{\g}}\\
&=&[\sigma_0(x_1),\dots,\sigma_0(x_{n-1}),w]_{\hat{\g}},
\end{eqnarray*}
thus, $\rho_2$ is independent on the choice of $\sigma$.
If we choose another splitting
$\sigma'_1: M\to \widehat M$, then $p_1(\sigma_1(m)-\sigma'_1(m))=m-m=0$, i.e. $\sigma_1(m)-\sigma'_1(m)\in \Ker p_1=V$.
Similar argument shows that $\rho_1, \rho_3$ are independent on the choice of $\sigma$.
Thus $\rho_{1},\rho_{2},\rho_{3}$ are well defined.

Secondly, we check that $\rho=(\rho_{1},\rho_{2},\rho_{3})$ is indeed a representation of $(M,\g,f)$ on $(V,W,\varphi)$.
Since $(V,W,\varphi)$ is an abelian $n$-Lie algebra in $\LM$, we have
\begin{eqnarray*}
&&\rho_2(x_1,\dots,x_{n-1})\rho_2(y_1,\dots,y_{n-1})(w)\\
&=&[\sigma_0(x_1), \dots, \sigma_0(x_{n-1}),[\sigma_0(y_1), \dots, \sigma_0(y_{n-1}), w]]_{\hg}\\
&=&[\sigma_0(y_1), \dots, [\sigma_0(x_1), \dots, \sigma_0(x_{n-1}),\sigma_0(y_i)]\dots, \sigma_0(y_{n-1}), w]]_{\hg}\\
&&+[\sigma_0(y_1), \dots, \sigma_0(y_{n-1}),[\sigma_0(x_1), \dots, \sigma_0(x_{n-1}), w]]_{\hg}\\
&=&\sumnoi \rho_2(y_1,\dots, [x_1,\dots,x_{n-1},y_i],\dots,y_{n-1}))(w)\\
&&+\rho_2(y_1,\dots,y_{n-1})\rho_2(x_1,\dots,x_{n-1})(w),
 \end{eqnarray*}
which implies that $\rho_2$ is a representation of  $\g$ on $W$.
Similarly, we get $\rho_1$ is a representation of  $\g$ on $V$.
For the equivariant between $\rho_{1}$ and $\rho_{2}$, we have
\begin{eqnarray*}
\varphi\circ\rho_{1}(x_1,\dots,x_{n-1})(v)&=&\varphi\wrho(\sigma_0(x_1),\dots,\sigma_0(x_{n-1}))(v)\\
&=&[\sigma_0(x_1),\dots,\sigma_0(x_{n-1}),\varphi(v)]\\
&=&\rho_{2}(x_1,\dots,x_{n-1})\circ\varphi(v).
\end{eqnarray*}
For $\rho_{3}: \g\otimes\dots\otimes\g\otimes M\to \Hom(W,V)$, we have
\begin{eqnarray*}
\varphi\circ\rho_{3}(x_1,\dots, x_{n-2}, m)(w)
&=&\varphi[\sigma_0(x_1), \dots, \sigma_0(x_{n-2}), w,\sigma_1(m)]\\
&=&-[\sigma_0(x_1), \dots, \sigma_0(x_{n-2}), w,\widehat{f}\sigma_1(m)]\notag\\
&=&-[\sigma_0(x_1), \dots, \sigma_0(x_{n-2}),w,\sigma_0(f(m))]\\
&=&[\sigma_0(x_1), \dots, \sigma_0(x_{n-2}),\sigma_0(f(m)),w]\\
&=&\rho_{2}(x_1,\dots, x_{n-2}, f(m))(w).
\end{eqnarray*}
Thus, $(\rho_1,\rho_2,\rho_3) $ is a representation of  $(M,\g,f)$ over $(V,W,\varphi)$.

Finally, suppose that $\widehat{\E}$ and $\widetilde{\E}$ are equivalent abelian extensions,
and $F:(\widehat M,\widehat \g, \widehat f)\longrightarrow(\widetilde{M},\widetilde{\g}, \widetilde{f})$ be the morphism.
Choose linear sections $\sigma$ and $\sigma'$ of $p$ and $q$. Then we have $q_0F_0\sigma_0(x_i)=p_0\sigma_0(x_i)=x_i=q_0\sigma'_0(x_i)$,
thus $F_0\sigma_0(x_i)-\sigma'_0(x_i)\in \Ker q_0=W$. Therefore, we obtain
\begin{eqnarray*}
[\sigma'_0(x_1),\dots,\sigma'_0(x_{n-1}),v+w]_{\tg}&=&[F_0\sigma_0(x),\dots,F_0\sigma'_0(x_{n-1}),v+w]_{\tg}\\
&=&F_0[\sigma_0(x_1),\dots,\sigma_0(x_{n-1}),v+w]_{\hg}\\
&=&[\sigma_0(x),\dots,\sigma_0(x_{n-1}),v+w]_{\hg},
\end{eqnarray*}
which implies that equivalent abelian extensions give the same $\rho_{1}$ and $\rho_{2}$. Similarly, we can show that equivalent abelian extensions also give the same $\rho_{3}$.
Therefore, equivalent abelian extensions also give the same representation. The proof is finished.
\end{proof}

Let $\sigma:(M,\g, f)\to (\widehat M,\widehat \g, \widehat f)$  be a section of the abelian extension. Define the following linear maps:
$$
\left\{
\begin{array}{ll}
&\theta: M \longrightarrow W,\quad  \theta(m)\triangleq\widehat{\dM}\sigma_1(m)-\sigma_0f(m),\\
&\omega:\wedge^n\frkg\longrightarrow W,\quad \omega(x_1, \dots, x_n)\triangleq[\sigma_0(x_1), \dots, \sigma_0(x_n)]_{\widehat{\g}}-\sigma_0([x_1, \dots, x_n]_{\g}),\\
&\nu:\wedge^{n-1}\frkg\otimes M\to V,\\
& \nu(x_1, \dots, x_{n-1}, m)\triangleq \widehat{\rho}(\sigma_0(x_1),\dots, \sigma_0(x_{n-1}))(\sigma_1(m))-\sigma_1\rho(x_1, \dots, x_{n-1})(m).
\end{array}\right.
$$
for all $x_i\in\frkg$, $m\in M $.

\begin{thm}\label{thm:2-cocylce}
With the above notations, $(\theta,\omega,\nu)$ is a $2$-cocycle of $(M,\g, f)$ with coefficients in $(V,W,\varphi)$.
\end{thm}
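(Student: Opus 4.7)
The plan is to verify each of the three cocycle conditions \eqref{eq:2-coc03}, \eqref{eq:2-coc01}, and \eqref{eq:2-coc02} by lifting the corresponding structural identity of $(\widehat M,\widehat\g,\widehat f)$ along the section $\sigma$. In each case the strategy is the same: take the relevant identity (equivariance of $\widehat f$, the fundamental identity of $\widehat\g$, or axiom (R2) for $\widehat\rho$), apply it to $\sigma$-images of base elements, and substitute the defining relations $\widehat f\circ\sigma_1=\sigma_0\circ f+\theta$, $[\sigma_0(x_1),\dots,\sigma_0(x_n)]_{\widehat\g}=\sigma_0([x_1,\dots,x_n]_\g)+\omega(x_1,\dots,x_n)$, and $\widehat\rho(\sigma_0(x_1),\dots,\sigma_0(x_{n-1}))\sigma_1(m)=\sigma_1\rho(x_1,\dots,x_{n-1})(m)+\nu(x_1,\dots,x_{n-1},m)$. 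The pure $\sigma$-image terms on both sides then cancel via the same identity in the base $(M,\g,f)$, leaving exactly the desired cocycle relation among $\theta,\omega,\nu$.

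A preliminary step is to confirm that $\theta,\omega,\nu$ take values in $W$, $W$, $V$ respectively. This is a diagram chase: $p_0\theta(m)=p_0\widehat f\sigma_1(m)-p_0\sigma_0 f(m)=fp_1\sigma_1(m)-f(m)=0$, so $\theta(m)\in\ker p_0=W$, and analogously for $\omega$ and $\nu$. Note also that $\widehat f$ restricts on $V=\ker p_1$ to $\varphi:V\to W$, so $\widehat f\circ\nu=\varphi\circ\nu$.

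For \eqref{eq:2-coc03} I apply the equivariance of $\widehat f$ to $\widehat\rho(\sigma_0(x_1),\dots,\sigma_0(x_{n-1}))\sigma_1(m)$ and expand both sides; using $\widehat f|_V=\varphi$ on the $\nu$-term and identifying $[\sigma_0(x_1),\dots,\sigma_0(x_{n-1}),\theta(m)]_{\widehat\g}$ with $\rho_2(x_1,\dots,x_{n-1})(\theta(m))$ by Proposition \ref{pro:2-modules}, the common $\sigma_0[x_1,\dots,x_{n-1},f(m)]_\g$ terms cancel and \eqref{eq:2-coc03} emerges. For \eqref{eq:2-coc01} I apply the fundamental identity \eqref{eq:Jacobi-n} in $\widehat\g$ to $\sigma_0(x_i),\sigma_0(y_j)$: each inner bracket of $\sigma_0$-images splits as a $\sigma_0$-image plus an $\omega$-term, and each outer bracket involving an $\omega$-output collapses to a $\rho_2$-action since $\omega$ lands in $W$. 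The fundamental identity in $\g$ eliminates the pure $\sigma_0$-image contributions, and what remains is exactly \eqref{eq:2-coc01}. Equation \eqref{eq:2-coc02} follows analogously by applying (R2) for $\widehat\rho$ to $\sigma_0(x_i),\sigma_0(y_j),\sigma_1(m)$: the pure $\sigma_1$-image part cancels by (R2) for $\rho$, and the surviving terms enter via $\rho_1$ on $\nu$-outputs and via $\rho_3$ on $\omega$-outputs (using also the $\widehat f|_V=\varphi$ relation to recognize the $\omega\circ\widehat f$ pairings as $\rho_3$-actions through \eqref{defrep03}).

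The main obstacle is the bookkeeping for \eqref{eq:2-coc01} and \eqref{eq:2-coc02}, which involve many cross terms from the $n$-ary bracket and the $(n-1)$-ary $\widehat\rho$. The essential simplification is the abelian hypothesis on $(V,W,\varphi)$: any $n$-bracket in $\widehat\g$ or $\widehat\rho$-action with two or more arguments drawn from the kernel $V\oplus W$ vanishes. This wipes out every quadratic-in-cocycle term and leaves only the linear ones, which are precisely the terms appearing on the right-hand sides of \eqref{eq:2-coc01}--\eqref{eq:2-coc03}. With that reduction in hand, the three verifications reduce to careful but routine substitutions, with the module axioms \eqref{defrep02}--\eqref{defrep07} entering only through the identification of brackets with $\rho_1,\rho_2,\rho_3$ actions.
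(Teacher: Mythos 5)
Your overall strategy is exactly the paper's: lift the three structural identities of $(\widehat M,\widehat\g,\widehat f)$ (equivariance of $\widehat f$, the fundamental identity of $\widehat\g$, and a representation axiom for $\widehat\rho$) along the section $\sigma$, substitute the defining relations for $\theta,\omega,\nu$, use the abelian hypothesis to kill every term with two or more kernel arguments, and cancel the pure $\sigma$-image contributions against the corresponding identity in the base. Your preliminary value checks and the identification of brackets with kernel arguments as $\rho_1,\rho_2,\rho_3$ actions via Proposition \ref{pro:2-modules} are all correct and match what the paper does.

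The one place your blueprint misfires is the third verification. The cocycle condition on $\nu$ has the shape
$\nu(x,[y,m])+\rho_1(x)\nu(y,m)=\sum_i\nu(y_1,\dots,[x,y_i],\dots,m)+\nu(y,[x,m])+\sum_i[y_1,\dots,\nu(x,y_i),\dots,m]+\rho_1(y)\nu(x,m)$,
where $x=(x_1,\dots,x_{n-1})$, $y=(y_1,\dots,y_{n-1})$ and $[y,m]=\rho(y)(m)$; this mirrors the commutator identity $\rho(x)\rho(y)-\rho(y)\rho(x)=\rho(x\circ y)$, i.e.\ axiom (R1), and that is precisely the identity the paper lifts: it applies $\widehat\rho(\sigma_0x)\widehat\rho(\sigma_0y)(\sigma_1 m)=\sum_i\widehat\rho(\sigma_0y_1,\dots,[\sigma_0x,\sigma_0y_i],\dots)(\sigma_1 m)+\widehat\rho(\sigma_0y)\widehat\rho(\sigma_0x)(\sigma_1 m)$ and expands. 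Your proposal instead invokes (R2) for $\widehat\rho$, which applied to $\sigma_0$-images produces a relation of the form $\widehat\rho(\sigma_0x_1,\dots,\sigma_0x_{n-2},[\sigma_0y_1,\dots,\sigma_0y_n])(\sigma_1m)=\cdots$; that is a different (also valid) identity, but it does not yield the $\nu$-condition above, and the terms you describe as "$\rho_3$ on $\omega$-outputs" do not appear in that condition. The fix is immediate --- replace (R2) by (R1) in the third step and the computation closes exactly as in your first two verifications --- but as written the third verification would not produce the required relation.
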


\begin{proof}
By the equality
$$\widehat{\dM}(\widehat{\rho}(\sigma_0 x_1,\dots, \sigma_0x_{n-1})(\sigma_1 (m)))=[\sigma_0 x_1,\sigma_0 x_{n-1},\widehat{\dM}\sigma_1 (m)]_{\widehat{\mathfrak{g}}},$$
we obtain that
\begin{eqnarray}
\nonumber &&\omega (x_1, \dots, x_{n-1},  f(m))+\rho_1(x_1, \dots, x_{n-1}, \theta (m))\\
&=&\theta ([x_1,\dots, x_{n-1}, m])+\varphi (\nu (x_1, \dots, x_{n-1}, m)).
\label{eq:2-coc03}
\end{eqnarray}

By the equality
\begin{eqnarray*}
[\si_0 x_1,\dots,\si_0 x_{n-1}, [\si_0 y_1,\dots,\si_0 y_n]]
&=&\sumn [\si_0 y_1,\dots,[\si_0 x_1,\dots,\si_0 x_{n-1},\si_0 y_i],\dots,\si_0 y_n],
\end{eqnarray*}
we get that the left-hand side is equal to
\begin{eqnarray*}
&=&[\si_0 x_1,\dots,\si_0 x_{n-1}, \omega(y_1,\dots,y_n)+\si_0([y_1,\dots,y_n])] \\
&=&\rho(x_1,\dots,x_{n-1})\omega(y_1,\dots,y_n)+[\si_0 x_1,\dots,\si_0 x_{n-1},\si_0([y_1,\dots,y_n])]\\
&=&\rho(x_1,\dots,x_{n-1})\omega(y_1,\dots,y_n)+\omega(x_1,\dots,x_{n-1},[y_1,\dots,y_n])\\
&&+\si_0([x_1,\dots,x_{n-1},[y_1,\dots,y_n]]),
\end{eqnarray*}
the right-hand side is equal to
\begin{eqnarray*}
&=&\sumn(-1)^{n-i}[\si_0 y_1,\dots,\si_0 \widehat{y_i} ,\dots,\si_0 y_n, [\si_0 x_1,\dots,\si_0 x_{n-1},\si_0 y_i]]\\
&=&\sumn(-1)^{n-i}[\si_0 y_1,\dots,\si_0 \widehat{y_i} ,\dots,\si_0 y_n,,\omega(x_1,\dots,x_{n-1},y_i)+\si_0([x_1,\dots, x_{n-1},y_i])] \\
&=&\sumn(-1)^{n-i}\rho(y_1,\dots,\widehat{y_i},\dots,y_n)\omega(x_1,\dots,x_{n-1},y_i)\\
&&+\omega(\sumn[y_1,\dots,[x_1,\dots,x_{n-1},y_i],\dots,y_n]])\\
&&+\si_0(\sumn[y_1,\dots,[x_1,\dots,x_{n-1},y_i],\dots,y_n]]).
\end{eqnarray*}
Thus, we obtain
\begin{eqnarray}
\nonumber&& \omega(x_1,\dots,x_{n-1},[y_1,\dots,y_{n}])+\rho(x_1,\dots,x_{n-1})\omega(y_1,\dots,y_{n})\\
\nonumber&=&\sumn\omega([y_1,\dots,[x_1,\dots,x_{n-1},y_i],\dots,y_n]])\\
&&+\sumn(-1)^{n-i}\rho(y_1,\dots,\widehat{y_i},\dots,y_n)\omega(x_1,\dots,x_{n-1},y_i).
\end{eqnarray}

Finally,  by the equality
\begin{eqnarray*}
&&\widehat\rho(\si_0 x_1,\dots, \si_0x_{n-1})\rho(\si_0y_1,\dots,\si_0y_{n-1})(\si_1 m)\\
&=&\sumnoi \rho(\si_0 y_1,\dots, [\si_0 x_1,\dots, \si_0 x_{n-1}, \si_0y_i],\dots, \si_0y_{n-1}))(\si_1 m)\\
&&+\rho(\si_0 y_1,\dots,\si_0 y_{n-1})\rho(\si_0 x_1,\dots, \si_0 x_{n-1})(\si_1 m),
\end{eqnarray*}
we obtain that
\begin{eqnarray}
\nonumber &&\nu(x_1, \dots, x_{n-1}, [y_1, \dots, y_{n-1},m])+[x_1, \dots, x_{n-1},\nu(y_1, \dots, y_{n-1}, m)]\\
\nonumber &=&\sumnoi\nu(y_1,\dots, [ x_1, \dots, x_{n-1}, y_i], \dots, y_{n-1}, m) +\nu(y_1, \dots, y_{n-1}, [ x_1, \dots, x_{n-1}, m])\\
\nonumber&&+\sumnoi [y_1,\nu(x_1, \dots, x_{n-1},y_i), \dots, y_{n-1}, m]+[y_1,\dots, y_{n-1}, \nu(x_1,  \dots, x_{n-1},  m)].\\
\end{eqnarray}
Thus $(\theta,\omega,\nu)$ is a $2$-cocycle.
\end{proof}

Now, we define the $n$-Lie algebra structure in $\LM$
 using the 2-cocycle given above. More precisely, we have
\begin{eqnarray*}
&&
\left\{\begin{array}{rl}
&\widehat{f}(m+v)\triangleq\dM(m)+\theta(v)+\varphi(v),\\
\end{array}\right.\\
&&
\left\{\begin{array}{rl}
&{[x_1+w_1, \dots, x_n+w_n]}\\
&\triangleq[x_1, \dots, x_n]+\omega(x_1, \dots, x_n)+\sumn(-1)^{n-i} \rho_2 (x_1,\dots, \widehat{x_i},\dots, x_n)(w_i),\\
\end{array}\right.\\
&&
\left\{\begin{array}{rl}
&{\wrho(x_1+w_1,\dots, x_{n-1}+w_{n-1})(m+v)}\\
&\triangleq\rho(x_1, \dots, x_{n-1})(m)+\nu(x_1, \dots, x_{n-1}, m)\\
&\quad+\rho_{1}(x_1,\dots, x_{n-1})(v)
+\sumnoi (-1)^{n-i}\rho_{3}(x_1,\dots, \widehat{x_i},\dots,x_{n-1},f(m))(w_i),
\end{array}\right.
\end{eqnarray*}
for all $x_i\in\mathfrak{g}$, $m\in M$,
$v\in V$ and $w_i\in W$. Thus any extension
$E_{\widehat{\g}}$ given by \eqref{eq:ext1} is isomorphic to
\begin{equation}\label{eq:ext2}
\CD
  0 @>0>>  V @>i_1>>  M \oplus V@>p_1>>  M  @>0>> 0 \\
  @V 0 VV @V \varphi VV @V \widehat{\dM} VV @V\dM VV @V0VV  \\
  0 @>0>>  W  @>i_0>> \g \oplus W @>p_0>> \g @>0>>0,
\endCD
\end{equation}
where the $n$-Lie algebra structure in $\LM$  is given as above.

\begin{thm}
There is a one-to-one correspondence between equivalence classes of abelian extensions and  elements of  the second cohomology group $\mathbf{H}^2((M,\g,f),(V,W,\varphi))$.
\end{thm}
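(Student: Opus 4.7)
The plan is to construct mutually inverse maps between the set of equivalence classes of abelian extensions of $(M,\g,f)$ by $(V,W,\varphi)$ and the second cohomology group $\mathbf{H}^2((M,\g,f),(V,W,\varphi))$, using the explicit 2-cocycle construction of Theorem~\ref{thm:2-cocylce} and the reconstruction recipe already displayed right before the theorem statement.

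First I would define $\Phi: [\widehat{\E}] \mapsto [(\theta,\omega,\nu)]$. Starting with an abelian extension $\widehat{\E}$, choose any splitting $\sigma=(\sigma_0,\sigma_1)$ and let $(\theta,\omega,\nu)$ be the associated triple from Theorem~\ref{thm:2-cocylce}, which is already known to be a $2$-cocycle. To check that $\Phi$ is well defined I must show that a different splitting $\sigma'=(\sigma'_0,\sigma'_1)$ yields a cohomologous cocycle. Since $p_0\circ(\sigma_0-\sigma'_0)=0$ and $p_1\circ(\sigma_1-\sigma'_1)=0$, the differences give linear maps $N_0:\g\to W$ and $N_1:M\to V$, i.e.\ a 1-cochain. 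A direct substitution into the definitions of $\theta,\omega,\nu$ using the formulas for $\widehat{f},[\cdot,\dots,\cdot]_{\widehat{\g}},\widehat{\rho}$, combined with the abelian nature of $(V,W,\varphi)$, will show that the two triples differ exactly by $D_1(N_0,N_1)$, the coboundary computed in the three formulas displayed after the cochain complex diagram. Equivalent extensions give the same $2$-cohomology class because given an equivalence $F=(F_0,F_1)$ and a splitting $\sigma$ of $p$, the pair $(F_0\circ\sigma_0,F_1\circ\sigma_1)$ is a splitting of $q$, and by Proposition~\ref{pro:2-modules} the associated representations coincide; computing the two cocycles with these linked splittings gives literally the same triple.

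Next I would define the inverse map $\Psi:[(\theta,\omega,\nu)]\mapsto [\widehat{\E}]$. Given a $2$-cocycle $(\omega,\nu,\theta)$, endow $M\oplus V$ and $\g\oplus W$ and the map $\widehat{f}$ with the structure written right before the theorem statement. The task is to check that $(M\oplus V,\g\oplus W,\widehat{f})$ is an $n$-Lie algebra in $\LM$. The equivariance of $\widehat{f}$, i.e.\ $\widehat{f}(\widehat{\rho}(\cdot)(m+v))=[\cdots,\widehat{f}(m+v)]$, follows from equation~\eqref{eq:2-coc03}; the fundamental identity for the bracket on $\g\oplus W$ follows from \eqref{eq:2-coc01} together with the conditions (R1), (R2) on $\rho_2$; the representation axioms for $\widehat{\rho}$ follow from \eqref{eq:2-coc02} together with the compatibility relations \eqref{defrep04}--\eqref{defrep07}. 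The natural inclusions and projections produce the short exact sequence \eqref{eq:ext2}, hence an abelian extension $\widehat{\E}$. If $(\theta,\omega,\nu)$ and $(\theta',\omega',\nu')$ differ by $D_1(N_0,N_1)$, then the map $F=(F_0,F_1):(x,w)\mapsto (x, w+N_0(x))$, $(m,v)\mapsto(m,v+N_1(m))$ provides an equivalence of the two constructed extensions; the coboundary conditions are exactly what is needed to show that $F$ respects $\widehat{f}$, the $n$-bracket, and the action.

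Finally I would verify $\Phi\circ\Psi=\id$ and $\Psi\circ\Phi=\id$. For $\Phi\circ\Psi$, the constructed extension admits the obvious splitting $\sigma_0(x)=(x,0)$, $\sigma_1(m)=(m,0)$, and plugging this into the definitions of $\theta,\omega,\nu$ returns the original cocycle verbatim. For $\Psi\circ\Phi$, given an extension $\widehat{\E}$ with splitting $\sigma$, the map $(x,w)\mapsto \sigma_0(x)+w$ and $(m,v)\mapsto \sigma_1(m)+v$ is an isomorphism between the reconstructed extension $\Psi(\Phi[\widehat{\E}])$ and $\widehat{\E}$, and the cocycle formulas defining $\theta,\omega,\nu$ show that this isomorphism intertwines the structures. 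The main obstacle will be the sheer bookkeeping in the cochain-level verifications: the cohomology $H^2$ lives in three components with coboundary given by six component maps $\delta_1,\delta_2,\delta_3,\varphi_\sharp,h_1,h_2$, and matching each of these up with the corresponding term produced by changing splittings (or comparing equivalent extensions) requires careful indexing of the $(n-1)$-ary slots, but no new conceptual input beyond the preceding propositions.
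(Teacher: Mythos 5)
Your proposal is correct and follows essentially the same route as the paper: pass from an extension with a chosen splitting to the $2$-cocycle of Theorem~\ref{thm:2-cocylce}, reconstruct an extension on $M\oplus V$ and $\g\oplus W$ from a cocycle, and match equivalences of extensions with coboundaries $D_1(b_0,b_1)$ of $1$-cochains in $\Hom(\g,W)\oplus\Hom(M,V)$. The only difference is organizational: you make explicit the splitting-independence and the two composite identities $\Phi\circ\Psi=\id$ and $\Psi\circ\Phi=\id$, which the paper leaves implicit in its reduction of every extension to the normalized form \eqref{eq:ext2}.
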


\pf
Let $\widehat{\E}'$ be another abelian extension determined by the 2-cocycle $(\theta',\omega',\nu')$. We are going to show that $\widehat{\E}$ and $\widehat{\E}'$ are equivalent if and only if 2-cocycles  $(\theta,\omega,\nu)$ and $(\theta',\omega',\nu')$ are in the same cohomology class.

 Since $F=(F_0,F_1)$ is an equivalence of extensions, there exist two linear maps $b_0:\g \longrightarrow W$
and $b_1: M \longrightarrow V$ such that
 $$F_0(x_i+w)=x_i+b_0(x_i)+w,\quad F_1(m+v)=m+b_1(m)+v.$$

First, by the equality
\begin{eqnarray*}
\label{eqn:fi} \widehat{f}'F_1(m)&=&F_0\widehat{f}(m),
\end{eqnarray*}
we have
\begin{equation}\label{eq:exact1}
\theta(m)-\theta'(m)=\varphi b_1(m)-b_0(\dM (m)).
\end{equation}

Second, by direct computations we have
\begin{eqnarray*}
&&F[x_1,\dots,x_{n}]_{\omega}\\
&=&F_1([x_1,\dots,x_{n}]+\omega(x_1,\dots,x_{n}))\\
&=&[x_1,\dots,x_{n}]+\omega(x_1,\dots,x_{n})+b_0([x_1,\dots,x_{n}])
\end{eqnarray*}
and
\begin{eqnarray*}
&&[F(x_1),\dots,F(x_n)]_{\omega'}\\
&=&[x_1+b_0(x_1),\dots,x_n+b_0(x_n)]_{\omega'}\\
&=&[x_1,\dots,x_n]+\omega'(x_1,\dots,x_n)+\sumn(-1)^{n-i}\rho(x_1,\dots,\widehat{x_i}\dots,x_n)b_0(x_i).
\end{eqnarray*}
Thus, we have
\begin{eqnarray}
\notag&&(\omega-\omega')(x_1,\dots,x_{n})\\
\label{eq:exact2}&=&\sumn(-1)^{n-i}\rho(x_1,\dots,\widehat{x_i}\dots,x_n)b_0(x_i)-b_0([x_1,\dots,x_{n}]).
\end{eqnarray}

Similarly, by the equality
\begin{eqnarray*}
&&F_1([x_1,\dots, x_{n-1}, m]+\nu(x_1, \dots, x_{n-1}, m))\\
&=&[x_1,\dots, x_{n-1}, m]+\nu(x_1, \dots, x_{n-1}, m)-b_1(x_1, \dots, x_{n-1}, m)
\end{eqnarray*}
and
\begin{eqnarray*}
&&\rho'(F_0(x_1),\dots,F_0(x_{n-1}))(F_1(m))\\
&=&[x_1,\dots, x_{n-1}, m]+\nu'(x_1, \dots, x_{n-1},  m)\\
&&-\sumnoi [x_1, \dots, b_0x_i, \dots, x_{n-1}, m]-\rho_1(x_1, \dots, x_{n-1})(b_1m).
\end{eqnarray*}
Thus  we get
\begin{eqnarray}
\notag&&(\nu-\nu')(x_1, \dots, x_{n-1},  m)\\
\notag&=&b_1\rho(x_1, \dots, x_{n-1})(m)-\sumnoi [x_1, \dots, b_0x_i, \dots, x_{n-1}, m]-\rho_1(x_1, \dots, x_{n-1})(b_1m).\\
\label{eq:exact3}
\end{eqnarray}

By \eqref{eq:exact1}-\eqref{eq:exact3}, we deduce that $(\psi,\omega,\nu)-(\psi',\omega',\nu')=D(b_0,b_1)$. Thus, they are in the same cohomology class.

Conversely, if  $(\theta,\omega,\nu)$ and $ (\theta',\omega',\nu')$ are in the same cohomology class,
assume that $(\psi,\omega,\nu)-(\psi',\omega',\nu')=D(b_0,b_1)$.
Then define $(F_0,F_1)$ by
 $$F_0(x+w)=x+b_0(x)+w,\quad F_1(m+v)=m+b_1(m)+v.$$
Similar as the above proof, we can show that $(F_0,F_1)$ is an equivalence. We omit the details.
\qed

\section{Infinitesimal deformations}
In this section, we explore infinitesimal deformations of $n$-Lie algebras in $\LM$ specifically focusing on the case of $n=3$. The results presented in this section are applicable to the general $n$ case. For more information on the theory of infinitesimal deformations for both Lie algebras and $3$-Lie algebras, one refers to \cite{D,FF,KM}.

Let $(M,\g, f)$ be a $3$-Lie algebra in $\LM$ and $\theta: M \to\g,~\omega:\wedge^3\g\to \g ,~\nu:\wedge^2\g \otimes M \to M$ be linear maps. Consider a $\lambda$-parametrized family of linear operations:
 \begin{eqnarray*}
\dM\dlam (m)&\triangleq&\dM (m)+\lambda\theta (m),\\
{[x_1, x_2, x_3]}\dlam&\triangleq& [x_1, x_2, x_3]+\lambda\omega(x_1, x_2, x_3),\\
{[x_1, x_2, m]}\dlam&\triangleq& [x_1, x_2, m]+ \lambda\nu(x_1, x_2, m),
 \end{eqnarray*}
 where we denote $[x_1, x_2, m]\triangleq\rho(x_1, x_2)(m)$ in the following of this section for simplicity.

If $(M_\lambda, \g\dlam, f\dlam)$ forms a 3-Lie algebra in $\LM$, then we say that
$(\theta,\omega ,\nu)$ generates a 1-parameter infinitesimal deformation of $(M,\g, f)$.

\begin{thm}\label{thm:deformation} With the notations above,
$(\theta,\omega ,\nu)$ generates a $1$-parameter infinitesimal deformation of $(M,\g, f)$  is equivalent to the following conditions:
\begin{itemize}
  \item[\rm(i)] $(\theta ,\omega ,\nu)$ is a $2$-cocycle of $\frkg$ with coefficients in the adjoint representation;

  \item[\rm(ii)] $(M,\g,\theta)$ is a $3$-Lie algebra in $\LM$ with brackets $\omega$ and $\nu$.
\end{itemize}
\end{thm}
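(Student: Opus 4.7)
The plan is to expand the four defining axioms of a $3$-Lie algebra in $\LM$ for the deformed triple $(M_\lambda, \g_\lambda, f_\lambda)$ as polynomials in $\lambda$, and to collect the coefficients at each order. Since every axiom is quadratic in the structure maps, the expansion terminates at $\lambda^2$, producing for each axiom three coefficient-equations indexed by $\lambda^0$, $\lambda^1$, and $\lambda^2$. The axioms to expand are (a) the fundamental identity for $[\cdot,\cdot,\cdot]_\lambda$, (b) the representation axioms (R1) and (R2) for $[\cdot,\cdot,\cdot]_\lambda$ acting on $M$, and (c) the equivariance condition $f_\lambda([x_1,x_2,m]_\lambda) = [x_1,x_2,f_\lambda(m)]_\lambda$.

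The $\lambda^0$ equations will reproduce the axioms for the original $3$-Lie algebra in $\LM$ and hold by hypothesis. The $\lambda^2$ equations will involve only $(\theta,\omega,\nu)$: expanding (c) gives $\theta(\nu(x_1,x_2,m)) = \omega(x_1,x_2,\theta(m))$, expanding (a) gives the Filippov identity for $\omega$, and expanding (b) shows that $\nu$ makes $M$ a representation of $(\g,\omega)$ satisfying (R1) and (R2). Collected together, these are precisely condition (ii): $(M,\g,\theta)$ is a $3$-Lie algebra in $\LM$ with bracket $\omega$ on $\g$ and action $\nu$ on $M$.

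The $\lambda^1$ equations will then be identified with the three components of $D_2(\omega,\nu,\theta) = 0$ for the adjoint representation. Expanding (c) will give $\theta([x_1,x_2,m]) + f(\nu(x_1,x_2,m)) = \omega(x_1,x_2,f(m)) + [x_1,x_2,\theta(m)]$, which is \eqref{eq:2-coc03} with $\varphi = f$. Expanding (a) will yield the cocycle equation \eqref{eq:2-coc01} for $\omega$ on $\wedge^3 \g$, after using the total skew-symmetry of the $3$-bracket to reposition each $\omega(x_1,x_2,y_i)$ into the standard last slot with the correct sign $(-1)^{3-i}$. Expanding (b) will yield \eqref{eq:2-coc02} for $\nu$. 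Hence the $\lambda^1$ equations will be exactly condition (i).

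The main obstacle is purely organizational: matching the terms produced by formal differentiation in $\lambda$ against the explicit coboundary formulas, and tracking signs from skew-symmetry of the $3$-bracket, especially when comparing the expansions of (R1) and (R2) (which produce partially redundant data in the adjoint case) against the single equation \eqref{eq:2-coc02}. No new conceptual ingredient is required beyond the clean decoupling of $\lambda^0$, $\lambda^1$, $\lambda^2$ coefficients that follows from the quadratic nature of the axioms.
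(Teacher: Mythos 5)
Your proposal is correct and follows essentially the same route as the paper: expand the equivariance of $f_\lambda$, the fundamental identity for $[\cdot,\cdot,\cdot]_\lambda$ on $\g$, and the module axioms for $M_\lambda$ as polynomials in $\lambda$, then read off the $\lambda^1$ coefficients as the $2$-cocycle conditions and the $\lambda^2$ coefficients as the $3$-Lie algebra structure $(M,\g,\theta)$ with brackets $\omega$ and $\nu$. The decoupling by order in $\lambda$, justified by the quadratic nature of the axioms, is exactly the paper's argument.
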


\begin{proof}
If $(M_\lambda, \g\dlam, f\dlam)$  is a $3$-Lie algebra in $\LM$, then by Definition \ref{defi:Lie 2}, $f\dlam$ is an equivariant map, thus we have
\begin{eqnarray*}
&&\dM\dlam ([x_1, x_2,m]\dlam)-[x_1, x_2, \dM\dlam (m)]\dlam\\
&=&(\dM+\lambda\theta )([x_1, x_2,m]+ \lambda\nu(x_1, x_2, m))\\
&&-[x_1, x_2, \dM (m)+\lambda\theta (m)]-\lambda\omega (x_1, x_2, \dM (m)+\lambda\theta (m))\\
&=&\dM([x_1, x_2,m])+\lambda(\theta [x_1, x_2,m]+\dM\nu(x_1, x_2,m))+\lambda^2\theta \omega (x_1, x_2,m)\\
&&-[x_1, x_2,\dM (m)]-\lambda(\omega (x_1, x_2, f(m))+[x_1, x_2,\theta (m)])-\lambda^2\nu(x_1, x_2,\theta (m))\\
&=&0,
\end{eqnarray*}
which implies that
\begin{eqnarray}
\label{eq:2-coc01}\theta ([x_1, x_2,m])+\dM \nu (x_1, x_2,m)-\omega (x_1, x_2, \dM (m))-[x_1, x_2,\theta (m)]&=&0,\\
\label{eq:2-coc02}\theta \nu(x_1, x_2,m)-\omega (x_1, x_2,\theta (m))&=&0.
\end{eqnarray}

Since $\g\dlam$ is a 3-Lie algebra, we have the equality
\begin{eqnarray*}
&&[x_1, x_2, [y_1, y_2, y_3]\dlam]\dlam \\
&=&[[ x_1, x_2, y_1]\dlam , y_2, y_3]\dlam  + [y_1, [ x_1, x_2, y_2]\dlam , y_3]\dlam  + [y_1, y_2, [ x_1, x_2, y_3]\dlam]\dlam,
\end{eqnarray*}
the left-hand side is equal to
\begin{eqnarray*}
&&[x_1, x_2, [y_1, y_2, y_3]+\lam\omega(y_1, y_2, y_3)]\dlam\\
&=&[x_1, x_2, [y_1, y_2, y_3]]+\lam\omega(x_1, x_2, [y_1, y_2, y_3])\\
&&+[x_1, x_2,\lam\omega(y_1, y_2, y_3)]+\lam\omega(x_1, x_2,\lam\omega(y_1, y_2, y_3))\\
&=&[x_1, x_2, [y_1, y_2, y_3]]+\lam\{\omega(x_1, x_2, [y_1, y_2, y_3])+[x_1, x_2,\omega(y_1, y_2, y_3)]\}\\
&&+\lam^2\omega(x_1, x_2,\omega(y_1, y_2, y_3)),
\end{eqnarray*}
and the right-hand side is equal to
\begin{eqnarray*}
&&[[x_1, x_2, y_1]+\lam\omega(x_1, x_2, y_1), y_2, y_3]\dlam  + [y_1, [ x_1, x_2, y_2]+\lam\omega(x_1, x_2, y_2), y_3]\dlam \\
&&+ [y_1, y_2, [ x_1, x_2, y_3]+\lam\omega(x_1, x_2, y_3)]\dlam\\
&=&[[ x_1, x_2, y_1] , y_2, y_3]  + [y_1, [ x_1, x_2, y_2] , y_3] + [y_1, y_2, [ x_1, x_2, y_3]]\\
&&+\lam\{\omega([x_1, x_2, y_1],y_2, y_3) +[\omega(x_1, x_2, y_1), y_2, y_3]\\
&&\qquad+\omega(y_1,[ x_1, x_2, y_2], y_3)+[y_1,\omega(x_1, x_2, y_2), y_3]\\
&&\qquad+\omega(y_1, y_2, [ x_1, x_2, y_3])+[y_1,y_2,\omega(x_1, x_2, y_3)] \}\\
&&+\lam^2\{\omega(\omega( x_1, x_2, y_1), y_2, y_3) + \omega(y_1, \omega( x_1, x_2, y_2), y_3) + \omega(y_1, y_2, \omega( x_1, x_2, y_3))\}.
\end{eqnarray*}
Thus, we have
\begin{eqnarray}
\nonumber &&\omega(x_1, x_2, [y_1, y_2, y_3])+[x_1, x_2,\omega(y_1, y_2, y_3)]\\
\nonumber &=&\omega([x_1, x_2, y_1],y_2, y_3)+\omega(y_1,[ x_1, x_2, y_2], y_3) +\omega(y_1, y_2, [ x_1, x_2, y_3])\\
\label{eq:2-coc03} &&+[\omega(x_1, x_2, y_1), y_2, y_3]+[y_1,\omega(x_1, x_2, y_2), y_3]+[y_1,y_2,\omega(x_1, x_2, y_3)],\\
\nonumber &&\omega(x_1, x_2,\omega(y_1, y_2, y_3))\\
\label{eq:2-coc04}&=&\omega(\omega( x_1, x_2, y_1), y_2, y_3) + \omega(y_1, \omega( x_1, x_2, y_2), y_3)+ \omega(y_1, y_2, \omega( x_1, x_2, y_3)).
\end{eqnarray}

Since $M_\lambda$ is a $\g\dlam$-module, we get the equality
\begin{eqnarray*}
&&[x_1, x_2, [y_1, y_2, m]\dlam]\dlam \\
&=&[[ x_1, x_2, y_1]\dlam , y_2, m]\dlam  + [y_1, [ x_1, x_2, y_2]\dlam , m]\dlam  + [y_1, y_2, [ x_1, x_2, m]\dlam]\dlam,
\end{eqnarray*}
the left-hand side is equal to
\begin{eqnarray*}
&&[x_1, x_2, [y_1, y_2, m]+\lam\nu(y_1, y_2, m)]\dlam\\
&=&[x_1, x_2, [y_1, y_2, m]]+\lam\nu(x_1, x_2, [y_1, y_2, m])\\
&&+[x_1, x_2,\lam\nu(y_1, y_2, m)]+\lam\nu(x_1, x_2,\lam\nu(y_1, y_2, m))\\
&=&[x_1, x_2, [y_1, y_2, m]]+\lam\{\nu(x_1, x_2, [y_1, y_2, m])+[x_1, x_2,\nu(y_1, y_2, m)]\}\\
&&+\lam^2\nu(x_1, x_2,\nu(y_1, y_2, m)),
\end{eqnarray*}
and the right-hand side is equal to
\begin{eqnarray*}
&&[[x_1, x_2, y_1]+\lam\nu(x_1, x_2, y_1), y_2, m]\dlam  + [y_1, [ x_1, x_2, y_2]+\lam\nu(x_1, x_2, y_2), m]\dlam \\
&&+ [y_1, y_2, [ x_1, x_2, m]+\lam\nu(x_1, x_2, m)]\dlam\\
&=&[[ x_1, x_2, y_1] , y_2, m]  + [y_1, [ x_1, x_2, y_2] , m] + [y_1, y_2, [ x_1, x_2, m]]\\
&&+\lam\{\nu([x_1, x_2, y_1],y_2, m) +[\nu(x_1, x_2, y_1), y_2, m]\\
&&\qquad+\nu(y_1,[ x_1, x_2, y_2], m)+[y_1,\nu(x_1, x_2, y_2), m]\\
&&\qquad+\nu(y_1, y_2, [ x_1, x_2, m])+[y_1,y_2,\nu(x_1, x_2, m)] \}\\
&&+\lam^2\{\nu(\nu( x_1, x_2, y_1), y_2, m) + \nu(y_1, \omega( x_1, x_2, y_2), m) + \nu(y_1, y_2, \nu( x_1, x_2, m))\}.
\end{eqnarray*}
Thus, we have
\begin{eqnarray}
\nonumber &&\nu(x_1, x_2, [y_1, y_2, m])+[x_1, x_2,\nu(y_1, y_2, m)]\\
\nonumber &=&\nu([x_1, x_2, y_1],y_2, m)+\nu(y_1,[ x_1, x_2, y_2], m) +\nu(y_1, y_2, [ x_1, x_2, m])\\
\label{eq:2-coc05} &&+[\nu(x_1, x_2, y_1), y_2, m]+[y_1,\omega(x_1, x_2, y_2), m]+[y_1,y_2,\nu(x_1, x_2, m)],\\
\nonumber &&\nu(x_1, x_2,\nu(y_1, y_2, m))\\
\label{eq:2-coc06}&=&\nu(\omega( x_1, x_2, y_1), y_2, m) + \nu(y_1, \omega(x_1, x_2, y_2), m)+ \nu(y_1, y_2, \nu( x_1, x_2, m)).
\end{eqnarray}

By \eqref{eq:2-coc01},  \eqref{eq:2-coc03} and \eqref{eq:2-coc05}, we deduce that $(\theta,\omega,\nu)$ is a 2-cocycle of $(M,\g,f)$ with the coefficients in the adjoint representation.
Furthermore, by \eqref{eq:2-coc02}, \eqref{eq:2-coc04} and \eqref{eq:2-coc06},
$(M,\g,\theta,\omega,\nu)$ is a 3-Lie algebra in $\LM$.
\end{proof}

Next, we introduce the notion of Nijenhuis operators which gives trivial deformations.
In this case, we consider the second-order deformation $(M\dlam,\g\dlam,\dM\dlam)$ where
 \begin{eqnarray*}
\dM\dlam (m)&\triangleq&\dM (m)+\lambda\theta_1 (m),\\
{[x_1, x_2, x_3]}\dlam&\triangleq& [x_1, x_2, x_3]+\lambda\omega_1 (x_1, x_2, x_3)+\lambda^2\omega_2 (x_1, x_2, x_3),\\
{[x_1, x_2, m]}\dlam&\triangleq& [x_1, x_2, m]+ \lambda\nu_1(x_1, x_2, m)+ \lambda^2\nu_2(x_1, x_2, m).
 \end{eqnarray*}
%
%

\begin{defi}
A deformation is said to be trivial if there exist  linear maps $N_0:\frkg \to \frkg ,N_1:  M \to  M $,
such that $(T_0,T_1)$ is a homomorphism from $(M\dlam,\g\dlam,\dM\dlam)$ to $(M,\g,\dM) $, where $T_0 = \id + \lambda N_0$,
$T_1 = \id + \lambda N_1$.
\end{defi}

Note that $(T_0,T_1)$ is a homomorphism means that
\begin{eqnarray}
\label{nij00} \dM \circ T_1(m)&=&T_0\circ \dM\dlam(m),\\
\label{nij01}
T_{0} ([x_1,x_2,x_3]\dlam)&=&[T_{0} x_1,T_{0} x_2,T_{0} x_3],\\
\label{nij02}
T_1 ([x_1,x_2,{m}]\dlam)&=&[T_0x_1,T_0 x_2,T_1 {m}].
\end{eqnarray}

From \eqref{nij00}, we have that the left-hand is equal to
\begin{eqnarray*}
 \dM \circ T_1(m)&=&\dM (m)+\lambda \dM N_1(m),
\end{eqnarray*}
and the right-hand is equal to
\begin{eqnarray*}
&&T_0\circ \dM\dlam(m)\\
 &=&(\id + \lambda N_0)(\dM (m)+\lambda\theta (m))\\
 &=&\dM (m)+\lambda N_0(\dM (m))+\lambda \theta (m)+\lambda^2N_0\theta (m).
\end{eqnarray*}
Thus, we get
$$\theta (m)=(\dM N_1-N_0\dM) (m),\quad N_0\theta (m)=0.$$
It follows that $N$ must satisfy the following condition:
\begin{align}\label{Nijenhuis0}
N_0(\dM N_1-N_0\dM)=0.
\end{align}

%

From the equation \eqref{nij01} we get
\begin{eqnarray}
\notag&&\omega_1(x_1,x_2,x_3)+N_0[x_1,x_2,x_3]\\
\label{eq:Nijenhuis01}&&\qquad\qquad=[N_0(x_1),x_2,x_3]+[x_1,N_0(x_2),x_3]+[x_1,x_2,N_0(x_3)],\\
\notag&&\omega_2(x_1,x_2,x_3)+N_0\omega_1(x_1,x_2,x_3)\\
\label{eq:Nijenhuis02}&&\qquad\qquad=[N_0(x_1),N_0(x_2),x_3]+[N_0(x_1),x_2,N_0(x_3)]+[x_1,N_0(x_2),N_0(x_3)],\\
\label{eq:Nijenhuis03}&&N_0\omega_2(x_1,x_2,x_3)=[N_0(x_1),N_0(x_2),N_0(x_3)].
\end{eqnarray}

It follows from \eqref{eq:Nijenhuis01}, \eqref{eq:Nijenhuis02} and \eqref{eq:Nijenhuis03} that $N_0$ must satisfy the following condition
\begin{eqnarray}
\notag&&[N_0(x_1),N_0(x_2),N_0(x_3)]\\
\notag&=&N_0([N_0(x_1),N_0(x_2), x_3] + [N_0(x_1), x_2,N_0(x_3)] + [x_1,N_0(x_2),N_0(x_3)]) \\
\notag&&-N_0^2([N_0(x_1), x_2, x_3]+[x_1,N_0(x_2), x_3]+[x_1,x_2,N_0(x_3)])\\
\label{eq:Nijenhuis04}&&+N_0^3([x_1,x_2,x_3]).
\end{eqnarray}

By definition the left-hand side of equation \eqref{nij02} is equal to
\begin{eqnarray*}
T_\lam ([x_1,x_2,{m}]\dlam)&=&[x_1,x_2,{m}]+\lam(\nu_1(x_1,x_2,{m})+ N_1[x_1,x_2,{m}])\\
&&+\lam^2(\nu_2(x_1,x_2,{m})+N_1\nu_1(x_1,x_2,{m}))+\lam^3N_1\nu_2(x_1,x_2,{m}),
\end{eqnarray*}
and the right-hand side equals to
\begin{eqnarray*}
&&[T_\lam (x_1),T_\lam (x_2),T_\lam ({m})]\\
&=&[x_1+\lam N_0(x_1),x_2+\lam N_0(x_2),{m}+\lam N_1({m})]\\
&=&[x_1,x_2,{m}]+\lam([N_0(x_1),x_2,{m}]+[x_1,N_0(x_2),{m}]+[x_1,x_2,N_1({m})])\\
&&+\lam^2([N_0(x_1),N_0(x_2),{m}]+[N_0(x_1),x_2,N_1({m})]+[x_1,N_0(x_2),N_1({m})])\\
&&+\lam^3[N_0(x_1),N_0(x_2),N_1({m})].
\end{eqnarray*}
Thus, we have
\begin{eqnarray}
\notag&&\nu_1(x_1,x_2,{m})+N_1[x_1,x_2,{m}]\\
\label{eq:Nijenhuis1}&&\qquad\qquad=[N_0(x_1),x_2,{m}]+[x_1,N_0(x_2),{m}]+[x_1,x_2,N_1({m})],\\
\notag&&\nu_2(x_1,x_2,{m})+N_1\nu_1(x_1,x_2,{m})\\
\label{eq:Nijenhuis2}&&\qquad\qquad=[N_0(x_1),N_0(x_2),{m}]+[N_0(x_1),x_2,N_1({m})]+[x_1,N_0(x_2),N_1({m})],\\
\label{eq:Nijenhuis3}&&N_1\nu_2(x_1,x_2,{m})=[N_0(x_1),N_0(x_2),N_1({m})].
\end{eqnarray}

It follows from \eqref{eq:Nijenhuis1}, \eqref{eq:Nijenhuis2} and \eqref{eq:Nijenhuis3} that $(N_0,N_1)$ must satisfy the following condition
\begin{eqnarray}
\notag&&[N_0(x_1),N_0(x_2),N_1({m})]\\
\notag&=&N_1([N_0(x_1),N_0(x_2), {m}] + [N_0(x_1), x_2,N_1({m})] + [x_1,N_0(x_2),N_1({m})]) \\
\notag&&-N_1^2([N_0(x_1), x_2, {m}]+[x_1,N_0(x_2), {m}]+[x_1,x_2,N_1({m})])\\
\label{eq:Nijenhuis4}&&+N_1^3([x_1,x_2,{m}]).
\end{eqnarray}

 \begin{defi}\label{defi:Nijenhuis}
  A pair of linear maps $N=(N_0,N_1)$ is called a Nijenhuis operator if  the conditions  \eqref{Nijenhuis0}, \eqref{eq:Nijenhuis04} and \eqref{eq:Nijenhuis4} hold.
 \end{defi}

We have seen that any second-order trivial deformation produces a Nijenhuis operator.
Conversely, one proves that any Nijenhuis operator gives a second-order trivial deformation.
\begin{thm}\label{thm:Nijenhuis}
Let $N=(N_0,N_1)$ be a Nijenhuis operator. Then a  second-order deformation  can be obtained by putting
\begin{equation}\label{Nijenhuis}
\left\{\begin{array}{rll}
\theta (m) &=&(\dM N_1-N_0\dM) (m),\\
\omega_1(x_1,x_2,x_3)&=&[N_0(x_1),x_2,x_3]+[x_1,N_0(x_2),x_3]+[x_1,x_2,N_0(x_3)]-N_0[x_1,x_2,x_3],\\
\omega_2(x_1,x_2,x_3)&=&[N_0(x_1),N_0(x_2),x_3]+[N_0(x_1),x_2,N_0(x_3)]+[x_1,N_0(x_2),N_0(x_3)]\\
&&-N_1\omega_1(x_1,x_2,x_3),\\
\nu_1(x_1,x_2,{m})&=&[N_0(x_1),x_2,{m}]+[x_1,N_0(x_2),{m}]+[x_1,x_2,N_1({m})]-N_1[x_1,x_2,{m}],\\
\nu_2(x_1,x_2,{m})&=&[N_0(x_1),N_0(x_2),{m}]+[N_0(x_1),x_2,N_1({m})]+[x_1,N_0(x_2),N_1({m})]\\
&&-N_1\nu_1(x_1,x_2,{m}).
\end{array}\right.
\end{equation}
Furthermore, this deformation is trivial.
\end{thm}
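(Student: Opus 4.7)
The plan is to prove both claims at once by exhibiting $(T_0, T_1) = (\id + \lambda N_0, \id + \lambda N_1)$ as an isomorphism of $3$-Lie algebras in $\LM$ over the formal power series ring, between the candidate deformed structure and the original $(M, \g, f)$. Since $T_0, T_1$ are invertible formal power series with $T_0^{-1} = \sum_{k \geq 0}(-\lambda N_0)^k$ and analogously for $T_1^{-1}$, I can pull back the original operations to obtain
\[
f_\lambda := T_0^{-1} \circ f \circ T_1, \quad [x_1, x_2, x_3]_\lambda := T_0^{-1}[T_0 x_1, T_0 x_2, T_0 x_3], \quad [x_1, x_2, m]_\lambda := T_1^{-1}[T_0 x_1, T_0 x_2, T_1 m].
\]
These automatically satisfy the $3$-Lie algebra in $\LM$ axioms, since the axioms transport along the isomorphism $(T_0, T_1)$, and triviality of the resulting deformation holds by definition. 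What remains is to show that these pullback formulas actually coincide with the polynomial expressions in \eqref{Nijenhuis}, so that what we have is a second-order (not higher-order) deformation.

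The technical work is a degree-by-degree expansion. Expanding $f_\lambda(m) = T_0^{-1}(f(m) + \lambda f N_1(m))$ and using $\theta := f N_1 - N_0 f$, every coefficient at $\lambda^j$ for $j \geq 2$ takes the form $(-1)^{j-1} N_0^{j-1} \theta(m)$; all of these vanish by iterating \eqref{Nijenhuis0}. Expanding $T_0^{-1}[T_0 x_1, T_0 x_2, T_0 x_3]$ in powers of $\lambda$, the coefficients at $\lambda$ and $\lambda^2$ reproduce $\omega_1$ and $\omega_2$ verbatim as in \eqref{Nijenhuis}, while the $\lambda^3$-coefficient equals
\[
[N_0 x_1, N_0 x_2, N_0 x_3] - N_0 A_2 + N_0^2 A_1 - N_0^3 [x_1, x_2, x_3],
\]
where $A_1, A_2$ are the obvious symmetric sums appearing in the definitions of $\omega_1$ and $\omega_2$, and this expression is killed by the Nijenhuis identity \eqref{eq:Nijenhuis04}. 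An identical calculation applied to $T_1^{-1}[T_0 x_1, T_0 x_2, T_1 m]$ yields $\nu_1$ and $\nu_2$ at the first two orders and a $\lambda^3$-residue that vanishes by \eqref{eq:Nijenhuis4}; the higher-order terms vanish by similar iteration.

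The main obstacle I anticipate is simply the bookkeeping: three parallel expansions, each producing a $\lambda^3$-residue that must be matched one-to-one with one of the three Nijenhuis identities \eqref{Nijenhuis0}, \eqref{eq:Nijenhuis04}, \eqref{eq:Nijenhuis4}. There is no conceptual depth beyond this matching, as the formulas in \eqref{Nijenhuis} are precisely engineered so that the pullback construction truncates at $\lambda^2$. Once the identification is complete, the fact that $(M_\lambda, \g_\lambda, f_\lambda)$ is a genuine second-order deformation and the triviality statement follow simultaneously from the isomorphism, and no independent verification of the $3$-Lie algebra in $\LM$ axioms is needed.
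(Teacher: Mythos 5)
Your proposal is correct, and it takes a genuinely different (and in fact more self-contained) route than the paper, which states the theorem without giving a proof: the implicit argument suggested by the surrounding text is to verify directly that the five maps in \eqref{Nijenhuis} satisfy the cocycle and compatibility conditions extracted from \eqref{nij00}--\eqref{nij02} and then invoke the framework of Theorem \ref{thm:deformation} — a verification that, for a second-order deformation, requires checking the fundamental identity and equivariance degree by degree up to $\lambda^4$. Your transport-of-structure argument sidesteps all of that: defining $f_\lambda=T_0^{-1}fT_1$, $[\cdot,\cdot,\cdot]_\lambda=T_0^{-1}[T_0\cdot,T_0\cdot,T_0\cdot]$ and the module action as $T_1^{-1}[T_0\cdot,T_0\cdot,T_1\cdot]$ makes every axiom (fundamental identity, (R1), (R2), equivariance of $f_\lambda$) hold automatically by conjugation, makes $(T_0,T_1)$ a trivializing homomorphism by construction, and reduces the entire theorem to the truncation claim, for which your bookkeeping is right: the $\lambda^{\ge 2}$ coefficients of $f_\lambda$ are $(-1)^{k-1}N_0^{k-1}\theta$ and vanish by \eqref{Nijenhuis0}, the $\lambda^3$ residues of the two bracket expansions vanish by \eqref{eq:Nijenhuis04} and \eqref{eq:Nijenhuis4}, and the $\lambda^{\ge 4}$ coefficients are $(-1)^{k-3}N_0^{k-3}$ (resp.\ $N_1^{k-3}$) applied to those residues, hence also vanish. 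Two small remarks. First, your $\lambda^2$-coefficient of the bracket expansion is $A_2-N_0A_1+N_0^2[x_1,x_2,x_3]=A_2-N_0\omega_1$, which agrees with \eqref{eq:Nijenhuis02} but not literally with the statement \eqref{Nijenhuis}, where $\omega_2$ is written with $-N_1\omega_1$; since $N_1$ acts on $M$ while $\omega_1$ takes values in $\g$, that is a typo in the theorem for $-N_0\omega_1$, and your expansion produces the correct version. Second, the assertion that the axioms ``transport'' deserves one explicit sentence — e.g.\ that $\rho_\lambda(x_1,x_2)=T_1^{-1}\rho(T_0x_1,T_0x_2)T_1$ satisfies (R1) and (R2) with respect to $[\cdot,\cdot,\cdot]_\lambda$ because the inner $T_0$'s cancel against the $T_0^{-1}$ in the pulled-back bracket — but this is a routine conjugation check and does not affect the validity of the argument.
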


\emptycomment{
\begin{proof}
Since $(\theta ,\omega ,\nu)=D(N_0,N_1)$, it is obvious that $(\theta ,\omega ,\nu)$ is a 2-cocycle.
It is easy to check that $(\theta ,\omega ,\nu)$ defines a 3-Lie algebra $(M,\g,\theta)$ in $\LM$ structure.
Thus by Theorem \ref{thm:deformation}, $(\theta ,\omega ,\nu)$ generates a deformation.
\end{proof}

\begin{eqnarray*}
\theta \nu(x_1, x_2, m)&=&(\dM N_1-N_0\dM)([N_0(x_1), x_2, m] + [x, N_1m] - N_1[x_1, x_2, m])\\
&=&\dM N_1([N_0(x_1), x_2, m] + [x, N_1m] - N_1[x_1, x_2, m])-N_0\dM([N_0(x_1), x_2, m] + [x, N_1m] - N_1[x_1, x_2, m])\\
&=&\dM N_1([N_0(x_1), x_2, m] + \dM N_1[x, N_1m] - \dM N_1 N_1[x_1, x_2, m])\\
&&-N_0[N_0x, \dM(m)] - N_0[x, \dM N_1m] + N_0 \dM N_1[x_1, x_2, m])\\
\end{eqnarray*}
\begin{eqnarray*}
\omega (x,\theta (m))&=&[N_0x, (\dM N_1-N_0\dM) (m)] + [x, N_0(\dM N_1-N_0\dM) (m)] - N_0[x, (\dM N_1-N_0\dM) (m)]\\
&=&[N_0x, \dM N_1 (m)]-[N_0x,N_0\dM (m)] - N_0[x, \dM N_1 (m)]+ N_0[x, N_0\dM (m)]\\
\end{eqnarray*}
}

Finally, we consider the formal deformations of any order.
Let $(M,\g, f)$ be a 3-Lie algebra in $\LM$ and $\theta_i: M \to\g,~\omega_i:\wedge^3\g\to \g ,~\nu_i: \wedge^2\g\otimes M \to M, i\geqslant 0$ be linear maps.
Consider the $\lambda$-parametrized family of linear operations:
\begin{eqnarray*}
\dM\dlam (m)&\triangleq&\dM (m)+\lambda\theta_1 (m)+\lambda^2\theta_1 (m)+\dots,\\
{[x_1, x_2, x_3]}\dlam&\triangleq& [x_1, x_2, x_3]+\lambda\omega_1 (x_1, x_2, x_3)+\lambda^2\omega_2 (x_1, x_2, x_3)+\dots,\\
{[x_1, x_2, m]}\dlam&\triangleq& [x_1, x_2, m]+ \lambda\nu_1(x_1, x_2, m)+ \lambda^2\nu_2(x_1, x_2, m)+\dots.
 \end{eqnarray*}
In order that  $(M\dlam, \g\dlam, f\dlam)$ forms a 3-Lie algebra in $\LM$, we must have
\begin{eqnarray*}
\omega\dlam (x_1, x_2, \omega\dlam(y_1,y_2,y_3))
&=&\omega\dlam (\omega\dlam(x_1, x_2, y_1),y_2,y_3)+\omega\dlam (y_1,\omega\dlam(x_1, x_2, y_2), y_3)\\
&&+\omega\dlam (y_1,y_2,\omega\dlam(x_1, x_2,y_3)),\\
\nu\dlam (x_1, x_2, \nu\dlam(y_1,y_2,m))
&=&\nu\dlam (\omega\dlam(x_1, x_2, y_1),y_2,m)+\nu\dlam (y_1,\omega\dlam(x_1, x_2,y_2), m)\\
&&+\nu\dlam (y_1,y_2,\nu\dlam(x_1, x_2, m)),\\
f\dlam\nu\dlam (x_1, x_2, m)&=&\omega\dlam(x_1, x_2,f\dlam(m)).
\end{eqnarray*}
which implies that
\begin{eqnarray}
\notag\sum_{i+j=k}\omega_i (x_1, x_2, \omega_j(y_1,y_2,y_3))
&=&\sum_{i+j=k}\omega_i (\omega_j(x_1, x_2, y_1),y_2,y_3)+\omega_i (y_1,\omega_j(x_1, x_2, y_2), y_3)\\
\label{eq:formal1}&&+\omega_i (y_1,y_2,\omega_j(x_1, x_2, y_3)),\\
\notag\sum_{i+j=k}\nu_i (x_1, x_2, \nu_j(y_1,y_2,m))
&=&\sum_{i+j=k}\nu_i (\omega_j(x_1, x_2, y_1),y_2,m)+\omega_i (y_1,\nu_j(x_1, x_2,m), y_3)\\
\label{eq:formal2}&&+\nu_i (y_1,y_2,\nu_j(x_1, x_2, m)),\\
\label{eq:formal3}\sum_{i+j=k}f_i\nu_j(x_1, x_2, m)&=&\sum_{i+j=k}\omega_i(x_1, x_2,f_j(m)),
\end{eqnarray}
 where we denote $f_0(m)=\dM (m), \omega_0(x_1, x_2, x_3)=[x_1, x_2, x_3]$, $\nu_0(x_1, x_2, m)=[x_1, x_2, m]$.

For $k=0$, conditions \eqref{eq:formal1}, \eqref{eq:formal2} and \eqref{eq:formal3} are equivalent to $(\omega_0,\nu_0, f_0)$ is an $n$-Lie algebra in $\LM$.
For $k=1$, these conditions  are equivalent to
\begin{eqnarray*}
&&\omega_1(x_1, x_2, [y_1, y_2, y_3])+[x_1, x_2,\omega_1(y_1, y_2, y_3)]\\
&=&\omega_1([x_1, x_2, y_1],y_2, y_3)+\omega_1(y_1,[ x_1, x_2, y_2], y_3) +\omega_1(y_1, y_2, [ x_1, x_2, y_3])\\
&&+[\omega_1(x_1, x_2, y_1), y_2, y_3]+[y_1,\omega_1(x_1, x_2, y_2), y_3]+[y_1,y_2,\omega_1(x_1, x_2, y_3)],
\end{eqnarray*}
\begin{eqnarray*}
 &&\nu_1(x_1, x_2, [y_1, y_2, m])+[x_1, x_2,\nu_1(y_1, y_2, m)]\\
 &=&\nu_1([x_1, x_2, y_1],y_2, m)+\nu_1(y_1,[ x_1, x_2, y_2], m) +\nu_1(y_1, y_2, [ x_1, x_2, m])\\
&&+[\nu_1(x_1, x_2, y_1), y_2, m]+[y_1,\omega_1(x_1, x_2, y_2), m]+[y_1,y_2,\nu_1(x_1, x_2, m)],
\end{eqnarray*}
\begin{eqnarray*}
&&\theta_1 ([x_1, x_2, m])+f \nu_1 (x_1, x_2, m)=\omega_1 (x_1, x_2, f(m))+[x_1, x_2,\theta_1 (m)].
\end{eqnarray*}
Thus for $k=1$, $(\omega_1,\nu_1,\theta_1)\in C^{2}((M,\g,f),(M,\g,f))$ is a 2-cocycle.

\begin{defi}
The 2-cochain $(\omega_1,\nu_1,\theta_1)$ is called the infinitesimal of the deformation
$(\omega\dlam,\nu\dlam,f\dlam)$. More generally, if $(\omega_i,\nu_i,f_i)=0$ for $1\leqslant i\leqslant (n -1)$,
and $(\omega_n,\nu_n,f_n)$ is a non-zero cochain in $C^{2}((M,\g,f),(M,\g,f))$, then $(\omega_n,\nu_n,f_n)$ is called the $n$-infinitesimal of
the deformation $(\omega\dlam,\nu\dlam,f\dlam)$.
\end{defi}

Let $(\omega\dlam,\nu\dlam,f\dlam)$ and $(\omega'\dlam,\nu'\dlam,f'\dlam)$ be two deformation.
We say that they are equivalent if there exists a formal isomorphism
$(\Phi\dlam,\Psi\dlam):(M'\dlam, \g'\dlam, f'\dlam)\to (M\dlam, \g\dlam, f\dlam)$ such that
$\omega'\dlam(x_1, x_2, x_3)=\Psi^{-1}\dlam\omega\dlam(\Psi\dlam(x_1),\Psi\dlam(x_2),\Psi\dlam(x_3))$.

A deformation $(\omega\dlam,\nu\dlam,f\dlam)$ is said to be the trivial deformation if it is  equivalent to $(\omega_0,\nu_0, f)$.

\begin{thm}
Let $(\omega\dlam,\nu\dlam,f\dlam)$ and $(\omega'\dlam,\nu'\dlam,f'\dlam)$ be equivalent deformations of  $(M,\g, f)$ be a 3-Lie algebra in $\LM$,
then the first-order terms of them belong to the same cohomology class in the second cohomology group $H^2((M,\g, f),(M,\g, f))$.
\end{thm}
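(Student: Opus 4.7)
The plan is to extract the first-order consequences of the formal equivalence and identify them with the coboundary formula $D_1$ already written out in Section~3.2 for 1-cochains with adjoint coefficients. Write the equivalence as $(\Phi\dlam, \Psi\dlam) = (\id + \lambda\Phi_1 + \lambda^2\Phi_2 + \cdots,\ \id + \lambda\Psi_1 + \lambda^2\Psi_2 + \cdots)$ with $\Phi_i \in \Hom(M,M)$ and $\Psi_i \in \Hom(\g,\g)$. By the definition of a homomorphism of $3$-Lie algebras in $\LM$, this pair must satisfy three compatibility equations: $f\dlam \circ \Phi\dlam = \Psi\dlam \circ f'\dlam$; the bracket-intertwining $\Psi\dlam([x_1,x_2,x_3]'\dlam) = [\Psi\dlam x_1,\Psi\dlam x_2,\Psi\dlam x_3]\dlam$; and the action-intertwining $\Phi\dlam([x_1,x_2,m]'\dlam) = [\Psi\dlam x_1,\Psi\dlam x_2,\Phi\dlam m]\dlam$.

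First, I would expand each of these three identities formally in $\lambda$ and collect the coefficient of $\lambda^1$. From the $f$-compatibility the constant term gives $f=f$ automatically, and the linear term yields
\begin{equation*}
\theta_1(m) - \theta'_1(m) = \Psi_1 f(m) - f\Phi_1(m).
\end{equation*}
From the bracket-compatibility the linear term gives
\begin{equation*}
(\omega_1 - \omega'_1)(x_1,x_2,x_3) = \Psi_1[x_1,x_2,x_3] - [\Psi_1 x_1,x_2,x_3] - [x_1,\Psi_1 x_2,x_3] - [x_1,x_2,\Psi_1 x_3],
\end{equation*}
and from the action-compatibility the linear term gives
\begin{equation*}
(\nu_1 - \nu'_1)(x_1,x_2,m) = \Phi_1[x_1,x_2,m] - [\Psi_1 x_1,x_2,m] - [x_1,\Psi_1 x_2,m] - [x_1,x_2,\Phi_1 m].
\end{equation*}

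Now I would compare these three formulas with the explicit $D_1$ on 1-cochains $(N_0,N_1) \in \Hom(\g,\g)\oplus \Hom(M,M)$ displayed in Section~3.2 for the adjoint representation (where $\varphi=f$, $\rho_i=\ad$). Setting $N_0 \triangleq -\Psi_1$ and $N_1 \triangleq -\Phi_1$, the three components of $D_1(N_0,N_1)$ reproduce exactly the right-hand sides above. Hence
\begin{equation*}
(\omega_1,\nu_1,\theta_1) - (\omega'_1,\nu'_1,\theta'_1) = D_1(-\Psi_1,-\Phi_1),
\end{equation*}
which shows that the two 2-cocycles are cohomologous and therefore define the same class in $H^2((M,\g,f),(M,\g,f))$. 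The one piece of bookkeeping that deserves a little care is making sure the signs in the three components of $D_1$ line up with the signs that fall out of expanding the trilinear brackets at first order; once the components are lined up, the main obstacle is purely notational. No higher-order terms of $\Phi\dlam$ or $\Psi\dlam$ enter, which is why the conclusion only concerns the first-order terms and the second cohomology group.
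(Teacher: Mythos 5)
Your proposal is correct and follows the same route as the paper: expand the formal equivalence $(\Phi\dlam,\Psi\dlam)$ in powers of $\lambda$, extract the coefficient of $\lambda$ from the three intertwining identities, and recognize the resulting difference $(\omega_1,\nu_1,\theta_1)-(\omega'_1,\nu'_1,\theta'_1)$ as the coboundary $D_1(-\Psi_1,-\Phi_1)$ for the adjoint representation. In fact the paper's own proof stops after merely writing down the intertwining identities, so your version supplies the first-order bookkeeping (with the correct signs) that the paper leaves implicit.
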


\pf
Let $(\Phi\dlam,\Psi\dlam):(M\dlam, \g\dlam, f\dlam)\to (M'\dlam, \g'\dlam, f'\dlam)$ be an equivalence
where $\Phi\dlam=\id_M+\lambda\phi_1+\lambda^2\phi_2+\dots$ and $\Psi\dlam=\id_M+\lambda\psi_1+\lambda^2\psi_2+\dots$.
Then we have
$\Psi\dlam\omega'\dlam(x_1, x_2, x_3)=\omega\dlam(\Psi\dlam(x_1),\Psi\dlam(x_2),\Psi\dlam(x_3))$
$\Psi\dlam\nu'\dlam(x_1, x_2, m)=\nu\dlam(\Phi\dlam(x_1),\Phi\dlam(x_2),\Psi\dlam(m))$
\qed

A 3-Lie algebra $(M,\g, f)$ in $\LM$ is called rigid if every deformation $(\omega\dlam,\nu\dlam,f\dlam)$ is equivalent to the trivial deformation.

\begin{thm}
If $H^2((M,\g, f),(M,\g, f))=0$, then $(M,\g, f)$ is rigid.
\end{thm}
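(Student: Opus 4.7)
The plan is to follow the standard rigidity argument in deformation theory, adapted to $n$-Lie algebras in $\LM$. Given a deformation $(\omega_\lambda, \nu_\lambda, f_\lambda)$ of $(M,\g,f)$, I will use the vanishing of $H^2$ to kill the lowest non-trivial term in the deformation order by order, producing a formal equivalence with the trivial deformation.

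First, I would show that if $(\omega_i, \nu_i, \theta_i) = 0$ for all $1 \le i \le n-1$, then the $n$-th order term $(\omega_n, \nu_n, \theta_n)$ is a $2$-cocycle in $C^2((M,\g,f),(M,\g,f))$. The case $n = 1$ is already established in the discussion following \eqref{eq:formal3}. For general $n$, I would isolate the $\lambda^n$-coefficient of equations \eqref{eq:formal1}, \eqref{eq:formal2}, \eqref{eq:formal3}: under the hypothesis that lower-order perturbations vanish, the cross-term sums $\sum_{i+j=n}$ collapse to the two extreme terms ($i=0, j=n$) and ($i=n, j=0$), which are precisely the expressions defining $\delta_1 \omega_n$, $\delta_2 \nu_n$, etc., entering the coboundary operator $D$ described in Section~3. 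Hence $D(\omega_n, \nu_n, \theta_n) = 0$.

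Since $H^2((M,\g,f),(M,\g,f)) = 0$, this cocycle is a coboundary: there exist linear maps $b_0^{(n)}\in\Hom(\g,\g)$ and $b_1^{(n)}\in\Hom(M,M)$ with $(\omega_n, \nu_n, \theta_n) = D(b_0^{(n)}, b_1^{(n)})$. I then define the formal isomorphism
\begin{eqnarray*}
\Psi_\lambda &=& \id_\g + \lambda^n b_0^{(n)}, \\
\Phi_\lambda &=& \id_M + \lambda^n b_1^{(n)},
\end{eqnarray*}
and transport the deformation through $(\Phi_\lambda, \Psi_\lambda)$ to obtain an equivalent deformation $(\omega'_\lambda, \nu'_\lambda, f'_\lambda)$. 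A direct computation, tracking only terms of degree $\le n$ in $\lambda$, shows that the transported $n$-th order term becomes
$(\omega'_n, \nu'_n, \theta'_n) = (\omega_n, \nu_n, \theta_n) - D(b_0^{(n)}, b_1^{(n)}) = 0,$
while all lower-order terms remain zero (since $b_0^{(n)}$ and $b_1^{(n)}$ first contribute at order $\lambda^n$). Thus the new deformation has trivial terms up to order $n$.

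Iterating this construction produces a sequence of formal isomorphisms whose composition is a well-defined formal power series (its coefficient at each fixed order $\lambda^k$ stabilizes after finitely many steps, since the $k$-th order term is only affected by $b^{(j)}$ with $j \le k$) yielding a formal equivalence between $(\omega_\lambda,\nu_\lambda,f_\lambda)$ and the trivial deformation $(\omega_0,\nu_0,f)$. The main obstacle I expect is the bookkeeping in step one: verifying cleanly that the quadratic cross-terms in \eqref{eq:formal1}--\eqref{eq:formal3} at order $n$ vanish under the induction hypothesis, and that the resulting linear combination coincides exactly with the coboundary operator $D$ of Section~3 (which mixes three components $-\delta_1 \omega$, $-h_1\omega + \delta_2\nu$, and $-h_2\omega + \varphi_\sharp \nu - \delta_3 \theta$); once this identification is made, the rest of the argument is the standard formal-deformation induction.
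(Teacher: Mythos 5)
Your proof is correct, and it is the complete, standard order-by-order rigidity induction; the paper's own proof is far more compressed and, read literally, takes a different (and weaker) route. The paper simply asserts $D(\omega_\lambda,\nu_\lambda,f_\lambda)=0$ for the entire deformation and then produces a single primitive $(N_0,N_1)$ with $(\omega_\lambda,\nu_\lambda,f_\lambda)=D(N_0,N_1)$, declaring the proof complete. That one-step argument glosses over two points your version handles properly: first, only the lowest nonvanishing term of a deformation is a cocycle --- the higher-order coefficients satisfy the Maurer--Cartan-type identities \eqref{eq:formal1}--\eqref{eq:formal3} with quadratic cross-terms, so the full family is not literally a cocycle; second, exhibiting the deformation as a coboundary does not by itself construct the formal equivalence with the trivial deformation. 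Your induction (isolate the first nonvanishing order $n$, check that the cross-terms $\sum_{i+j=n}$ collapse to the coboundary $D(\omega_n,\nu_n,\theta_n)$ under the vanishing hypothesis, kill it with $\Psi_\lambda=\id+\lambda^n b_0^{(n)}$, $\Phi_\lambda=\id+\lambda^n b_1^{(n)}$, and iterate, noting that each fixed $\lambda$-coefficient of the composite stabilizes) is the argument the paper evidently intends but does not write out. The only detail left implicit in your write-up is the bookkeeping you yourself flag --- matching the three components of $D$ from Section~3 against the $\lambda^n$-coefficients of \eqref{eq:formal1}--\eqref{eq:formal3}, including signs --- but this is a routine verification and the paper's own $k=1$ computation already exhibits the pattern.
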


\begin{proof}
Let $(\omega\dlam,\nu\dlam,f\dlam)$ be a  deformation of $(M,\g, f)$.
It follows from the above results that  $D(\omega\dlam,\nu\dlam,f\dlam)=0$, that is $(\omega\dlam,\nu\dlam,f\dlam)\in Z^2((M,\g, f),(M,\g, f))$.
Now assume $H^2((M,\g, f))=0$, we can find $(N_0,N_1)$ such that $(\omega\dlam,\nu\dlam,f\dlam)=D(N_0,N_1)$.
The proof is completed.
\end{proof}

\section*{Acknowledgements}
This research was supported by the National Natural Science Foundation of China (No. 11961049).

\section*{Data availibility}
Data sharing not applicable to this article as no datasets were generated or analysed during the current study.

\section*{Declarations}
\textbf{Conflict of interest}
On behalf of all authors, the corresponding author states that there is no conflict of interest.


\begin{thebibliography}{99}
\bibitem{AB}
M. Afi and O. Basdouri,
On the isomorphism of non-abelian extensions of  $n$-Lie algebras,
\emph{J. Geom. Phys.} { 173} (2022) 104427.



\bibitem{Bai}
R. Bai, W. Wu, Y. Li and Z. Li, Module extensions of 3-Lie algebras, \emph{Linear Multilinear Algebra} { 60} (4)  (2012) 583--594.


\bibitem{CGM}
J. F. Cari\~{n}ena, J. Grabowski and G. Marmo, Quantum bi-Hamiltonian systems,  \emph{Int. J. Mod. Phys. A} { 15} (2000) 4797--4810.

\bibitem{CKL}
J. M. Casas, E. Khmaladze and M. Ladra, Crossed modules for Leibniz $n$-algebras, \emph{Forum Math.} { 20} (2008) 841--858.

\bibitem{CKL1}
J. M. Casas, E. Khmaladze and M. Ladra, Notes on Leibniz $n$-algebras, \emph{J. Geom. Phys.} { 195} (2024) 105035.


\bibitem{CLP}
J. M. Casas, J.-L. Loday and T. Pirashvili, Leibniz $n$-algebras, \emph{Forum Math.} { 14} (2002) 189--207.

\bibitem{CWWZ}
M. Chen, S-K. Wang, K. Wu, W. Zhao, Infinite-dimensional 3-algebra and integrable system,
\emph{JHEP} 12 (2012) 030.


\bibitem{DT}
Y. I. Daletskii and L. A. Takhtajan, Leibniz and Lie algebra structures for Nambu algebra, \emph{Lett. Math. Phys.} { 39} (1997) 127--141.



\bibitem{GTV} X. Garc\'{i}a-Mart\'{i}nez, R. Turdibaev, T. Van der Linden,
Do $n$-Lie algebras have universal enveloping algebras?  \emph{J. Lie Theory} 28 (1)  (2018), 43--55.


\bibitem{Gau} P. Gautheron, Some remarks concerning Nambu mechanics, \emph{Lett. Math. Phys.} { 37} (1996) 103--116.


\bibitem{D} I.~Dorfman, Dirac Structures and Integrability of Nonlinear Evolution Equation, John Wiley \& Sons, Ltd., Chichester, 1993.

\bibitem{FF}
J.~Figueroa-O'Farrill, Deformations of 3-algebras,  \emph{J. Math. Phys.} { 50} (2009) 113514.

\bibitem{Fil1}
V. T. Filippov, $n$-Lie algebras, \emph{Sibirsk. Mat. Zh.} { 26}  (6)  (1985) 126--140.


\bibitem{HHM}
P. Ho, R. Hou and Y. Matsuo, Lie 3-algebra and multiple M2-branes, \emph{JHEP} { 06} (2008) 020.



\bibitem{Kas}
Sh. M. Kasymov, On a theory of $n$-Lie algebras, \emph{Algebra Logic} { 26} (3)  (1987) 277--297.


\bibitem{KW} M. K. Kinyon and A. Weinstein, Leibniz algebras, Courant algebroids, and multiplications
on reductive homogeneous spaces, \emph{Amer. J. Math.} 123 (3)  (2001), 525--550.


\bibitem{KM}
Y. Kosmann-Schwarzbach and F. Magri, Poisson-Nijenhuis structures, \emph{Ann. Inst. Henri Poincar\'e}  { 53}  (1)  (1990) 35--81.

\bibitem{kotov-strobl} A. Kotov and T. Strobl, The embedding tensor, Leibniz-Loday algebras and their higher gauge theories,
 \emph{Comm. Math. Phys.} 376  (2020) 235--258.

\bibitem{Kur}
R. Kurdiani,
Cohomology of Lie algebras in the tensor category of linear maps,
\emph{Comm. Algebra} 27 (10) (1999) 5033--5048.



\bibitem{lavau-p} S. Lavau and J. Palmkvist, Infinity-enhancing Leibniz algebras,
\emph{Lett. Math. Phys.} 110  (2020) 3121--3152.


\bibitem{Ling}
W. X. Ling, On the structure of $n$-Lie algebras, Ph.D. Thesis, University-GHS-Siegen, Siegen, 1993.


\bibitem{LZ} W. Liu and Z. Zhang, $T$*-extension of 3-Lie algebras,
\emph{Linear Multilinear Algebra} { 60} (5)  (2012) 583--594.

\bibitem{Loday}
J.-L. Loday and T. Pirashvili, Universal enveloping algebras of Leibniz algebras and (co)-homology,
\emph{Math. Ann.}  { 296} (1993) 139--158.

\bibitem{LP}
J.-L. Loday and T. Pirashvili, The tensor category of linear maps and Leibniz algebras,
\emph{Georgian Math. J.} 5 (3) (1998) 263--276.




\bibitem{Nam}
Y. Nambu, {Generalized Hamiltonian dynamics}, \emph{Phys. Rev. D} { 7} (1973) 2405--2412.



\bibitem{Pal}
J. Palmkvist, The tensor hierarchy algebra, \emph{J. Math. Phys.} 55 (2014) 011701.


\bibitem{PS}
C. Papageorgakis and C. S\"amann,  The 3-Lie algebra (2,0) tensor multiplet and equations of motion on loop space,
\emph{JHEP} 05 (2011) 099.


\bibitem{Rot}
M. Rotkiewicz, Cohomology ring of $n$-Lie algebras, \emph{Extracta Math.} { 20} (2005) 219--232.

\bibitem{Rov}
A. Rovi, Lie algebroids in the Loday-Pirashvili category,  \emph{SIGMA} 11 (2015)  079, 16 pages.



\bibitem{Tak1}
L. Takhtajan, On foundation of the generalized Nambu mechanics, \emph{Comm. Math. Phys.} { 160} (1994) 295--316.

\bibitem{Tak2}
L. Takhtajan, A higher order analog of Chevalley-Eilenberg complex and deformation theory of $n$-algebras,
\emph{St. Petersburg Math. J.} { 6} (1995) 429--438.

\bibitem{Zhang0}
T. Zhang, Notes on cohomologies of Lie triple systems, \emph{J. Lie Theory} 24 (4) (2014) 909--929.

\bibitem{Zhang1}
T. Zhang, Cohomology and deformations of 3-Lie colour algebras, \emph{Linear Multilinear Algebra} { 63} (2015) 651--671.

\bibitem{Zhang2}
T. Zhang,
Extending structures for 3-Lie algebras,
\emph{Comm. Algebra} 50 (4) (2022) 1469--1497.



\end{thebibliography}
\end{document}